\newtheorem{defn}{Definition}[section]
\newtheorem{prop}[defn]{Proposition}
\newtheorem{lem}[defn]{Lemma}
\newtheorem{thm}[defn]{Theorem}
\newtheorem{conjecture}[defn]{Conjecture}
\newtheorem*{conj}{Conjecture}
\newtheorem*{conj*}{Conjecture}
\newcommand {\C}{{\mathbb C}}
\newcommand {\HH}{{\mathfrak  H}}
\title{}
\author{ \\
}
\begin{document}

\title{Eichler cohomology and zeros of polynomials associated to derivatives of $L$-functions}
\author{Nikolaos Diamantis (University of Nottingham) 
\\
Larry Rolen (Hamilton Mathematics Institute \& Trinity College Dublin)
}

\maketitle
\begin{abstract}
In recent years, a number of papers have been devoted to the study of roots of period polynomials of modular forms. Here, we 
study cohomological analogues of the Eichler-Shimura period polynomials corresponding to higher $L$-derivatives.
We state general conjectures about the locations of the roots of the full and odd parts of the polynomials, in analogy with the 
existing literature on period polynomials, and we also give numerical evidence that similar results hold for our higher derivative ``period polynomials'' in the case of cusp forms.
We prove a special case of this conjecture in the case of Eisenstein series.

\end{abstract}

\section{Introduction}
Derivatives of $L$-functions, and especially higher order derivatives, remain mysterious objects. This is despite
intense work on key conjectures about them, such as those of
Beilinson, Birch--Swinnerton-Dyer, etc. Beilinson's conjecture, as formulated in \cite{KZ},
relates values of derivatives of $L$-functions to fundamental objects called periods. In \cite{KZ}, 
a complex number is called a {\it period} if its real and imaginary parts have the form
$$\int_V \frac{P(\mathbf{x})}{Q(\mathbf{x})}d\mathbf{x},$$
where $V$ is a domain in $\mathbb{R}^n$ defined by polynomial inequalities with coefficients in $\mathbb{Q}$
and $P, Q \in \mathbb{Q}[X_1, \dots, X_n].$ This definition accounts for numbers that are clearly very important
for number theory and other areas of mathematics, but are not necessarily algebraic, e.g.
$$\pi=\iint\limits_{x^2+y^2 \le 1} dx dy \qquad \text{and} \, \, \log(n)=\int_1^n \frac{1}{x}dx \, \, (\text{for} \, \,  
n \in \mathbb{N}).$$
Denote the set of periods by $\mathcal{P}$. 
A special case of Beilinson's conjecture in a version given in \cite{KZ} can be
stated as follows. 

\noindent
\begin{conj}[Delinge-Beilinson-Scholl]
Let $f$ be a weight $k$ Hecke eigencuspform for SL$_2(\mathbb{Z})$, $L_f(s)$ its $L$-function, and $m$ an integer. Then, if $r$ is the order of vanishing of $L_f(s)$ at $s=m$,  we have
$$L^{(r)}(m) \in \mathcal P [1/\pi ].$$
\end{conj}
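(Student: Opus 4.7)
The plan is to reduce everything to integral representations that are manifestly of period type, and treat the critical and non-critical ranges of $m$ separately. Starting from the completed $L$-function $\Lambda_f(s) := (2\pi)^{-s}\Gamma(s) L_f(s)$, which is entire since $f$ is cuspidal, one has the Mellin representation
\[
\Lambda_f(s) = \int_0^\infty f(iy)\, y^{s-1}\, dy,
\]
valid everywhere by cuspidality and rapid decay (using $f(i/y) = (-1)^{k/2} y^k f(iy)$ to control the $y \to 0$ end). Differentiating $r$ times inside the integral yields
\[
\Lambda_f^{(r)}(s) = \int_0^\infty f(iy)\, y^{s-1} (\log y)^r\, dy,
\]
and one extracts $L_f^{(r)}(m)$ from the first $r+1$ derivatives of $\Lambda_f$ at $s=m$ by inverting the archimedean factor $(2\pi)^{-s}\Gamma(s)$. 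The resulting algebraic combination involves $\log(2\pi)$ and polygamma values $\psi^{(j)}(m)$, and one checks that all such quantities arising at integer $m$ lie in $\mathcal{P}[1/\pi]$: rational $\Gamma$-values contribute nothing, $\psi$ at positive integers is a rational combination of $\zeta$-values (hence periods), and $\log \pi$ is tolerated because $\pi \in \mathcal{P}$ has been inverted.

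The next step is to show the integrals $I_j(m) := \int_0^\infty f(iy)\, y^{m-1} (\log y)^j\, dy$ belong to $\mathcal{P}[1/\pi]$ in the critical range $1 \le m \le k-1$. I would split as $\int_0^1 + \int_1^\infty$, apply the functional equation of $f$ to fold the first piece onto $[1,\infty)$, and then substitute the Fourier expansion $f(iy) = \sum_{n \ge 1} a_n e^{-2\pi n y}$, interchanging sum and integral. Each summand reduces, after a change of variable, to a derivative in an auxiliary parameter of an incomplete gamma integral
\[
\int_1^\infty y^{m-1} e^{-2\pi n y}\, dy,
\]
which is expressible through elementary periods once $e^{-2\pi n}$ is eliminated using further derivatives (or by working with $\Gamma(m, 2\pi n)$, whose values are known to generate periods). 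The algebraicity of the Hecke eigenvalues $a_n$, together with exponential convergence, then places the resulting sum in $\mathcal{P}[1/\pi]$.

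The main obstacle is the non-critical range, where $m \le 0$ or $m \ge k$; here the automatic zero of $\Gamma(s)^{-1}$ or the functional equation forces $r \ge 1$ and the naive Mellin argument no longer isolates the correct invariant. This is precisely the content of Beilinson's conjecture, and I would follow Scholl's construction attaching a Chow motive $M(f)$ to the cuspform and realize $L_f^{(r)}(m)$ as (up to a known factor) the regulator of a motivic cohomology class on the Kuga--Sato variety. The target of the Beilinson regulator is Deligne cohomology, whose pairing with a rational de Rham class is an integral of an algebraic differential form with logarithmic singularities over a semialgebraic chain, and such integrals are in $\mathcal{P}$ essentially by construction. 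I do not expect a self-contained proof of this part; rather, the plan here is to invoke the existence of the motivic class (constructed in the Eisenstein symbol / modular symbol framework of Beilinson) and reduce the statement to the already-established fact that Deligne periods lie in $\mathcal{P}[1/\pi]$. This last reduction is the genuinely hard step, and it is where the conjecture remains open in general.
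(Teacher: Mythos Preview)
The statement you are attempting to prove is a \emph{conjecture}, and the paper does not offer any proof of it. It is stated in the introduction purely as motivation; the paper immediately notes that the cases $r=0$ with $m$ critical are due to Manin and Deligne, that $r=0$ with $m>k-1$ is due to Beilinson and Deninger--Scholl, and that for $r\ge 2$ ``very little is known.'' There is therefore no proof in the paper to compare your proposal against.

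Independently of that, your sketch has a genuine gap already in the ``easy'' critical range. The integrals
\[
I_j(m)=\int_0^\infty f(iy)\,y^{m-1}(\log y)^j\,dy
\]
are not visibly periods in the Kontsevich--Zagier sense: the integrand is not a rational function but a transcendental one (an infinite $q$-series). Splitting at $y=1$ and inserting the Fourier expansion produces incomplete gamma values $\Gamma(m,2\pi n)$, which involve $e^{-2\pi n}$; this number is \emph{not} known to be a period, and your remark that it can be ``eliminated using further derivatives'' does not correspond to any actual mechanism. The known results for $r=0$ (Manin, Deligne) do not proceed this way at all: they realise $\Lambda_f(m)$ as a genuine period of an algebraic differential form on a Kuga--Sato variety, which is exactly what makes the rationality of the integrand work. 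Your proposal bypasses this geometric input and so does not even recover the classical case.

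For the non-critical range you correctly identify that one must invoke Beilinson's regulator and Scholl's motive, and you correctly note that this is where the conjecture is open. But since that is the entire content of the conjecture, the proposal amounts to ``prove the conjecture by assuming the conjecture.'' In short: there is nothing to compare here, and the sketch as written does not establish even the known special cases.
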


The cases of $r=0$ and $m=1, \dots, k-1$ (that is, the case of {\it critical values} of $L_f(s)$) have been treated by Manin, Deligne
and others (e.g. \cite{M, De}). 
The case when $r=0$ and $m> k-1$ has been proven by Beilinson and Deninger-Scholl (see \cite{KZ} and the references therein) 
but for $r>0$ the picture is much less clear. Fundamental results by Gross-Zagier \cite{GZ}
and others in the context of the
Birch--Swinnerton-Dyer conjecture give insight for $r=1$ and $k=2$, but very little is known for $r \ge 2$.

One of the important tools for studying critical values is the {\it period polynomial}
$$\sum_{n=0}^{k-2} \binom{k-2}{n} i^{1-n}\Lambda_f(n+1)z^{k-2-n},$$
where
$\Lambda_f(s):=(2 \pi)^{-s}\Gamma(s) L_f(s)$ denotes the completed $L$-function (e.g. \cite{M, KoZ, Z}). Background for the
period polynomial will be discussed in the 
next section.

In this paper, we offer the following conjecture  about an analogue of this polynomial for all derivatives of $L_f(s)$ and then prove it in the case of Eisenstein series.
\begin{conjecture}\label{Conj0} (``Riemann hypothesis for period polynomials attached to derivatives of $L$-functions")
For any Hecke eigenform of weight $k$ on $\operatorname{SL}_2(\mathbb Z)$, and for each $m\in\mathbb Z_{\geq0}$, the polynomial 
\[
Q_f(z):=\sum_{n=0}^{k-2} \binom{k-2}{n} i^{1-n}\Lambda_f^{(m)}(n+1)z^{k-2-n}
\]
has all its zeros on the unit circle. 
Moreover, the ``odd part'' 
\[\sum_{\substack{n=1 \\ n \, \, \text{odd}}}
^{k-3}\binom{k-2}{n} i^{1-n}\Lambda_f^{(m)}(n+1)z^{k-3-n}
\]
has all of its zeros on the unit circle, except for a trivial zero at $z=0$ and $4$ zeros at the points $\pm a,\pm1/a$ for some real number $a$.
\end{conjecture}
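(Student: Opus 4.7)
The plan is to extend the techniques that handle the classical $m=0$ case to all $m\geq 0$, leveraging two pillars: (i) a self-reciprocity of $Q_f$ induced by the functional equation of $\Lambda_f$, and (ii) an explicit count of sign changes of $Q_f(e^{i\theta})$ on the unit circle. Starting from the Mellin representation $\Lambda_f(s)=\int_0^\infty f(it)\,t^{s-1}\,dt$ (for Eisenstein series understood after regularizing the pole at $s=k$), differentiating $m$ times in $s$ and substituting into the definition of $Q_f$ gives, via the binomial theorem,
\[
Q_f(z)=i\int_0^\infty f(it)(\log t)^m(z-it)^{k-2}\,dt,
\]
a logarithmic twist of the Eichler integral. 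Differentiating $\Lambda_f(s)=\varepsilon_f\Lambda_f(k-s)$ yields $\Lambda_f^{(m)}(s)=(-1)^m\varepsilon_f\Lambda_f^{(m)}(k-s)$, and combining this with $f(-1/\tau)=\varepsilon_f\tau^k f(\tau)$ and the substitution $t\mapsto 1/t$ in the integral produces a self-reciprocal identity $z^{k-2}Q_f(1/z)=c_{k,m}Q_f(z)$ with an explicit sign $c_{k,m}\in\{\pm 1\}$. Consequently the even and odd parts of $Q_f$ are themselves self-reciprocal, forcing their non-unit-modulus zeros to occur in reciprocal pairs $\{z,1/z\}$.

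The core of the argument is to count sign changes of the real-valued function $\theta\mapsto Q_f(e^{i\theta})/e^{i(k-2)\theta/2}$ on $[0,2\pi)$. Following the Conrey--Farmer--Imamoglu and Jin--Ma--Ono--Soundararajan paradigm, I would split the integral at $t=1$ and apply the functional equation on $[0,1]$ to fold the two halves into
\[
Q_f(e^{i\theta})=2\,\mathrm{Re}\!\int_1^\infty f(it)(\log t)^m(e^{i\theta}-it)^{k-2}\,dt+R(\theta),
\]
with $R(\theta)$ an explicit lower-order remainder. For $m=0$ one isolates the $n=1$ term of the Fourier expansion of $f$ as the dominant oscillatory contribution and bounds the rest using Deligne's bound on Hecke eigenvalues, thereby producing exactly $k-2$ simple sign changes. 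The analogue for general $m$ replaces the rapid decay $e^{-2\pi nt}$ by $e^{-2\pi nt}(\log t)^m$; the dominant-term analysis must accommodate the resulting widening of the effective range of $t$, and the required control of the remainder reduces to uniform estimates of logarithmic moments $\sum_{n\geq 1}a_f(n)(\log n)^m n^{-\sigma}$.

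The principal obstacle is this uniform oscillatory-integral estimate. The weight $(\log t)^m$ concentrates extra mass near $t=0$ and $t=\infty$, precisely where the modular involution $t\mapsto 1/t$ places the compensating contribution, so the residual cancellation after folding becomes delicate to quantify. Producing \emph{exactly} $k-2$ sign changes --- not merely a lower bound --- appears to require a fine analysis of how $(\log n)^m$-weighted Hecke sums interact with the binomial coefficients, a step with no direct precedent in the literature. A secondary difficulty concerns the four exceptional zeros $\pm a,\pm 1/a$ of the odd part: self-reciprocity alone forces non-unit zeros in reciprocal pairs, but pinning down that exactly two such pairs occur and that they are real demands explicit identification of the responsible quartic factor $(z^2-a^2)(1-a^2z^2)$. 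The natural candidate is built from the endpoint coefficients $\Lambda_f^{(m)}(2)$ and $\Lambda_f^{(m)}(k-2)$, which the functional equation relates; establishing $a\in\mathbb{R}$ in the cuspidal case amounts to a positivity statement on a specific combination of these higher derivatives, analogous to but substantially harder than the inequality on $\zeta$-derivatives that one verifies in the Eisenstein setting.
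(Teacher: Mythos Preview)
The statement you are attempting to prove is stated in the paper as a \emph{conjecture}, not a theorem; the authors do not prove it in general. What they do prove are special cases restricted to Eisenstein series: for $f=E_k$ with $k\equiv 0\pmod 4$ and $m=1$, all zeros of the odd part $P_f$ lie on the unit circle (Theorem~\ref{1stderE}); and for general $m\ge 1$, an analogous result for a modified polynomial $P_f^m$ built from $\widetilde\Lambda_f(s)=\Lambda_f(s)/\cos(\pi s/2)$ rather than from $\Lambda_f$ itself. Their method is quite different from yours and is specific to Eisenstein series: they exploit the factorization $L_{E_k}(s)=\zeta(s)\zeta(s-k+1)$ to write the coefficients explicitly, verify monotonicity and positivity of the top half of the coefficient sequence (via elementary inequalities on harmonic numbers and derivatives of $\zeta'/\zeta$), and then invoke the Enestr\"om--Kakeya theorem together with the self-inversive polynomial criterion of \cite{ElG-R}. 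They do not address the cuspidal case, the full polynomial $Q_f$, or the exceptional quartet $\pm a,\pm 1/a$ beyond numerical evidence.

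Your write-up is a strategy rather than a proof, and you acknowledge this. The self-reciprocity of $Q_f$ you derive from the functional equation is correct and is exactly the transformation $Q_f(-1/z)=z^{2-k}Q_f(z)$ the paper records. Your plan to adapt the Conrey--Farmer--Imamoglu / Jin--Ma--Ono--Soundararajan sign-change count with a $(\log t)^m$ weight is the natural line of attack on the cuspidal case, but the two obstacles you name --- uniform control of the logarithmically weighted oscillatory integrals sufficient to pin down \emph{exactly} $k-2$ sign changes, and the isolation of the real quartic factor carrying the four exceptional zeros --- are genuine and unresolved. The paper itself flags these difficulties in Section~\ref{Gen}, noting that ``significant theoretical bounds seem to be required'' already for $m=1$ and that the Enestr\"om--Kakeya argument breaks down for cusp forms because the coefficients oscillate. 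So there is nothing wrong with your outline as an outline, but it does not constitute a proof, and there is no proof in the paper to compare it against --- only partial Eisenstein results obtained by an entirely different (coefficient-monotonicity) route.
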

The reason for calling it a ``Riemann Hypothesis" is that if $z_0$ is a zero of $Q_f$, then $-1/z_0$ is also a zero, as implied by the transformation
$$Q_f(-1/z)=Q_f(z)z^{2-k}.$$
Thus, the unit circle is the natural line of symmetry in this case (and can be mapped to a ``zeta polynomial'' where the Riemann Hypotheses stipulates that the roots lie on the line $\operatorname{Re}(s)=\frac12$ in a natural framework due to Manin \cite{Ma} 
and expounded upon by the authors of \cite{ORS}). 
This transformation law is an implication (see Lemma \ref{mthderiv} and \eqref{Q_f}) 
of the cohomological structure we associate to derivatives of $L$-functions and which formed the conceptual basis for the conjecture.  

{\bf Example}
We conclude with a numerical example of the conjecture, which is discussed further in Section~\ref{Gen}. Consider the  normalized weight $20$, level $1$ cuspidal eigenform $f$. A rescaling and change of variables in the second polynomial in Conjecture~\ref{Conj0} yields a polynomial approximately given by 
\[
z^{17}-5.805z^{15}+9.685z^{13}-6.720z^{11}+6.720^7-9.685z^5+5.805z^3-z.
\]
The roots predicted by Conjecture~\ref{Conj0} in this case are at $z=0$, at points on the unit circle with arguments approximately $0$, $13.5$ and $43$ degrees, together with the real and complex conjugates of these points, and seems to have roots at $\pm a^{\pm 1}$ for $a$ approximately $1.9$.

\section{Motivation, background, and structure of the paper}
As mentioned above, the values of $L_f(s)$ themselves inside the critical strip are better
understood than those of the derivatives. Important tools that have been used in their study are the
period polynomial and the Eichler cohomology.

Specifically, if $f$ is a cusp form of weight $k$ for $\Gamma:=\text{SL}_2(\mathbb{Z})$, it is possible to associate a $1$-cocycle to it as follows. 
We consider the action $|_{2-k}$ of $\Gamma$ on the space $\mathcal O$ of holomorphic 
maps on the upper half plane $\HH$, defined, for each $f\colon \HH \to \C$, by 
\begin{equation}\label{action}
(f|_{2-k} \gamma)(\tau):=f(\gamma \tau) j(\gamma, \tau)^{k-2}, \qquad \tau \in \HH, \gamma \in \Gamma,
\end{equation}
where 
$$j\left ( \left ( \begin{smallmatrix} * & * \\ c & d \end{smallmatrix} \right ), \tau \right ):=c\tau+d.$$
We use the same notation for the restriction of this action to the space $P_{k-2}$ of polynomial 
functions of degree $\le k-2$.

Then, for some $\tau_0 \in \mathfrak{H} \cup \mathbb{Q}\cup \{\infty\},$ 
we assign to our cusp form $f$ a map $\sigma_f$ that sends $\gamma \in \Gamma$ to 
$$\int_{\tau_0}^{\gamma^{-1}(\tau_0)}f(\tau)(\tau-z)^{k-2} d\tau.$$
The map $\sigma_f$ is then a $1$-cocycle and its cohomology class is independent of the choice of $\tau_0$.
We call this an {\it Eichler cocycle}.

Then, taking $\tau_0=0$, the {\it period polynomial} of $f$ can be recovered as the value of $\sigma_f$ at $S:=
\left ( \begin{smallmatrix} 0 & -1 \\ 1 & 0 \end{smallmatrix} \right )$:
$$\sigma_f(S)=\int_0^{\infty} f(\tau) (\tau-z)^{k-2}d\tau.$$
Indeed, as will be shown in greater generality in the sequel, this equals
$$\sum_{n=0}^{k-2} \binom{k-2}{n} i^{1-n}\Lambda_f(n+1)z^{k-2-n}.$$

The relations satisfied by the period polynomial as a value of an Eichler cocycle (including those originating in
the Hecke action compatibility of Eichler cocycles), have far-reaching consequences for the arithmetic and
geometry of $f$. For instance, a fairly immediate implication of the Eichler-Shimura isomorphism applied to the
period polynomial is Manin's Periods Theorem \cite{Ma}, which provides important information about 
the arithmetic nature of critical $L$-values.

The fundamental nature of the period polynomial is reflected in other aspects of its structure, 
for instance when viewed as a polynomial. In particular, the location of its roots has been studied by various 
authors. It seems that the first case to be considered was the analogue of the period
polynomial associated to Eisenstein series. Even to formulate the correct definition of period polynomials for 
Eisenstein series has been an important question of independent interest. The versions that will play a role in 
this paper are those of D. Zagier \cite{Z} and of F. Brown \cite{Br}.

With the definition as in \cite{Z}, M. R. Murty, C.J. Smyth and R.J.Wang \cite{MSW} 
proved that all non-real zeros of the odd part of the period polynomial of 
an Eisenstein series lie on the unit circle. This is a natural line of symmetry for the period polynomials, given that they are reciprocal polynomials thanks to the functional equation for completed $L$-values, and so as explained by S. Jin, W. Ma, K. Ono, and K. Soundararajan in \cite{JMOS}, such results can be thought of as a sort of ``Riemann Hypothesis'' for period polynomials. The interested reader is also referred to \cite{ORS} for further results connecting such results to Manin's theory of ``zeta polynomials'' $Z_f(s)$, which are transformed versions of the period polynomials sending the unit circle to the line $\operatorname{Re}(s)=\frac12$ and satisfying the functional equation $Z_f(1-s)=\pm Z_f(s)$. M.N. Lal\'\i n and C.J. Smyth \cite{LS} proved that all roots of the
full period polynomial of an Eisenstein series are unimodular. Analogous results have been proved
for the case of cusp forms. For example, J.B. Conrey, D.W. Farmer and \"O. Imamoglu \cite{CFI} 
have proved that, apart from some ``trivial" real roots, all roots of the odd part of the 
period polynomial of a cusp form lie on the unit circle. (In the context of Conjecture~\ref{Conj0} above, these are exactly the points stated there with $a=2$). 
A. El-Guindy and W. Raji \cite{ElG-R} have extended this to the full period polynomial by
showing that its roots are all unimodular. The analogues of these results for higher levels, together 
with very explicit approximations for the exact locations of the roots is proved in \cite{JMOS}. 
 
An unexpected observation that we first made numerically was that the unimodularity of the roots 
also occurs for certain ``period polynomials" attached to derivatives of $L$-functions defined by D. Goldfeld and 
the first author. Motivated by the success of Eichler cohomology and the (classical) period polynomials, 
they defined analogues of the period polynomial that encode values of {\it derivatives} of $L$-functions \cite{G, D1, D}.

Specifically, with the same notation as above, we noted that, for the many examples of weights
we numerically tested, the polynomial 
$$-\sum_{n=0}^{k-2} \binom{k-2}{n} i^{1-n}\Lambda_f'(n+1)z^{k-2-n}$$
has its roots on the unit circle. Likewise, the ``odd part" of this polynomial seems to have 
roots all on the unit circle other that 5 simple ones of a shape resembling the main results of \cite{CFI}.

That was surprising because, as pointed out in \cite{KZ}, only the first non-vanishing 
derivatives are normally expected to have some number-theoretic significance. However, 
in the polynomial in question, all integer values of
derivatives inside the critical strip play an equal role in our results.

To develop a general framework for these phenomena, we had to conceptually justify our choice of the ``period polynomial" encoding 
values of derivatives of $L$-functions. That was achieved by cohomological 
considerations on the basis of the classical Eichler theory and the constructions of \cite{D}.

We first (see Section \ref{ppolyE}) reinterpret the cocycles associated to values of $L$-functions of a general modular 
form $f$ in a way that will be consistent with our corresponding construction for derivatives of $L$-functions. Our cocycle is
``canonical" in the sense that it belongs to the same cohomology class
as the image of $f$ under the Eichler-Shimura isomorphism (see Proposition \ref{canon}). Furthermore, it 
turns out that, in our interpretation, the period polynomial we associate to Eisenstein series coincides with
the version of \cite{Br}, rather than that of \cite{Z}. 

The form of the cocycles we assign to derivatives of general modular forms (Sec. \ref{derppolyE}) 
is entirely analogous to that of the cocycles we attach to values of $L$-functions. As is normally to 
be expected in the case of (higher) derivatives, one has to separate the ``main" from the ``lower order" terms. 
A second feature indicating that our cocycles are the ``right'' objects to look at is that the 
passage from first to second and higher derivatives is achieved via a group cohomological
construction based on cup products, thus allowing for a unified treatment of all derivatives. 

With these ``period polynomials for derivatives of $L$-functions" in place, we turn our attention to their zeros. We
prove special cases of our conjecture (which is stated in Setion. \ref{Gen}) for Eisenstein series. We only treat the analogues
of \cite{MSW} and \cite{CFI}, by focusing on the ``odd parts" of these polynomials. 

The additional numerical evidence for the truth of the
Conjecture~\ref{Conj0} in the cuspidal case, as well as further discussion, is given in Section \ref{Gen}.

\section*{Acknowledgements}
The authors are grateful to Kathrin Bringmann, Francis Brown, Dorian Goldfeld, Ken Ono, and Jesse Thorner for useful discussions, as well as to Trinity College Dublin 
for hosting the first author on a visit where this project arose.

\section{Cocycles associated to values of $L$-functions and to values of their derivatives}\label{ppolyE}
The classical Eichler-Shimura theory assigns a cocycle to a modular form $f$ in such a way that
it characterizes the critical values of $L_f(s)$. In the case of the modular group this
cocycle is determined by its value at the involution which is called ``period polynomial." This
was originally defined and studied for cusp forms yielding many important arithmetic results.

Zagier \cite{Z} seems to be the first one to study an extension of the period polynomial to 
non-cuspidal forms. His definition has been generalized and modified by various authors in 
accordance with different perspectives. We will recall one of them in the next subsection.

Another direction in which Eichler-Shimura theory and period polynomials have been extended 
is to values of derivatives of $L$-functions of cusp forms. Goldfeld \cite{G} and the first author \cite{D1, D, D2}
have considered
an approach allowing for the encoding values of derivatives into analogues of the period polynomial 
and which can be interpreted in the context of Eichler cohomology. 

In this section, we extend the constructions of \cite{D} and \cite{G} to non-cuspidal modular 
forms. Before doing that,  
we consider the cocycles associated to values of $L$-functions of general modular 
forms in a formulation that fits the ``period polynomials" we will associate 
to derivatives of $L$-functions in the next subsection. 

\subsection{Cocycles associated to values of $L$-functions}
In this subsection, we reformulate the known theory of cocycles associated to values of $L$-functions 
of general modular forms so that it is consistent with the corresponding construction for derivatives of $L$-functions
that we will discuss in the next subsection.

Throughout, let $k$ be an even positive integer. 
We will be using the action $|_{2-k}$ of $\Gamma$ on $\mathcal O$ defined by \eqref{action}
and its restriction to the space $P_{k-2}$ of polynomial 
functions of degree $\le k-2$.

As usual, we denote the space of $i$-cochains for $\Gamma$ with coefficients in a right $\Gamma$-module $M$
by $C^i(\Gamma, M).$ We will also use the formalism of ``bar resolution" for the differential $d^i\colon C^i(\Gamma, M) \to 
C^{i+1}(\Gamma, M)$:
\begin{align}\label{differential}
\begin{split}
&(d^i \sigma)(g_1, \dots, g_{i+1}):=\\
&
\sigma(g_2,\dots,g_{i+1}).g_1 
+\sum_{j=1}^i (-1)^j \sigma(g_1,\dots, g_{j+1}g_j, \dots, g_{i+1})
+(-1)^{i+1}\sigma(g_1,\dots,g_{i}).
\end{split}
\end{align}

We now start the construction of the cocycles we will assign to a modular form.
Let $$f(\tau)=\sum_{n=0}^{\infty} a_n e^{2 \pi i n \tau}$$
be an element of the space $M_k$ of modular forms of weight $k$ for $\Gamma$.
As usual, we define its $L$-function, for Re$(s) \gg 1$, by
$$L_f(s):=\sum_{n=1}^{\infty}\frac{a_n}{n^s}$$
and, the ``completed" $L$-function by
$$\Lambda_f(s):=(2 \pi)^{-s} \Gamma(s) L_f(s).$$
It is well-known (see, e.g., \cite{I}, Chapt. 7) that 
$\Lambda_f$ has a meromorphic continuation to the entire complex plane with possible
(simple) poles at $0$ and $k$ and that it
 satisfies the functional equation
\begin{equation}\label{FE}
\Lambda_f(s)= i^k \Lambda_f(k-s).
\end{equation}
It further has an integral expression:
\begin{equation}\label{Mellin} \Lambda_f(s)=
\int_1^{\infty}(f(iv)-a(0))v^{s-1} dv+
i^k \int_1^{\infty}(f(iv)-a(0))v^{k-s-1} dv-
\frac{a(0)}{s}-\frac{a(0)i^k}{k-s}
\end{equation}
which, in the cuspidal case, reduces to the classical 
expression for $\Lambda_f$ as a Mellin transform.

Define $v_f \in C^0(\Gamma, \mathcal{O})$ by
$$v_f(z):=\int_{\infty}^z (f(w)-a_0)(w-z)^{k-2}dw+\frac{a_0}{k-1} z^{k-1}.$$
This is well-defined because of the exponential decay of $f(w)-a_0$ at $\infty$.
Set $\sigma_f:=d^0v_f.$
The next lemma shows that this is a $1$-cocycle in Eichler cohomology.
\begin{lem} The map $\sigma_f$ takes values in $P_{k-2}.$ In particular, it
gives a $1$-cocycle in $P_{k-2}$.
\end{lem}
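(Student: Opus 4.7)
The plan is as follows. Since $\sigma_f = d^0 v_f$ lies in $C^1(\Gamma, \OO)$ and $d^1 \circ d^0 = 0$, it is automatically a $1$-cocycle with values in $\OO$; the substantive content of the lemma is therefore the assertion that $\sigma_f(\gamma) \in P_{k-2}$ for every $\gamma \in \Gamma$. Because $P_{k-2}$ is stable under the action $|_{2-k}$, the cocycle identity $\sigma_f(gh) = \sigma_f(g)|_{2-k} h + \sigma_f(h)$ reduces the problem to verifying the membership on a set of generators. I will use the standard generators $S$ and $T$ of $\mathrm{SL}_2(\ZZ)$.

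For $T$, the verification is immediate: substituting $w \mapsto w+1$ in the integral defining $v_f(z+1)$, together with the periodicity $f(w+1) = f(w)$, cancels the two integral contributions and leaves
\[
\sigma_f(T)(z) = v_f(z+1) - v_f(z) = \frac{a_0}{k-1}\bigl[(z+1)^{k-1} - z^{k-1}\bigr] \in P_{k-2}.
\]

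For $S$, I would note that $\sigma_f(S)(z) = v_f(-1/z)\, z^{k-2} - v_f(z)$ is holomorphic on the upper half plane and prove that $(d/dz)^{k-1}\sigma_f(S) \equiv 0$; this forces $\sigma_f(S)$ to be a polynomial in $z$ of degree at most $k-2$. Applying Leibniz under the integral sign, the boundary terms from evaluating $(u-z)^{k-2-j}$ at $u=z$ all vanish for $0 \le j \le k-3$, and combining with the contribution from $\tfrac{a_0}{k-1} z^{k-1}$ one obtains $(d/dz)^{k-1} v_f(z) = (k-2)!\,f(z)$. For the transformed piece, the substitution $w = -1/u$ and the modularity relation $f(-1/u) = u^k f(u)$ yield
\[
v_f(-1/z)\,z^{k-2} = \int_0^z \bigl[f(u) - a_0 u^{-k}\bigr](u-z)^{k-2}\, du \; - \; \frac{a_0}{(k-1)z};
\]
differentiating this $k-1$ times in $z$ produces $(k-2)![f(z) - a_0 z^{-k}] + a_0 (k-2)!\, z^{-k} = (k-2)!\,f(z)$, so the two contributions cancel.

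The subtle step is the change of variables $w = -1/u$: one must deform the contour to a path from $0$ to $z$ inside the upper half plane and verify that the apparent pole of $f(u) - a_0 u^{-k}$ at $u=0$ is removable. The latter follows by rewriting this expression as $u^{-k}(f(-1/u) - a_0)$ and invoking the exponential decay of $f - a_0$ at the cusp $i\infty$, which makes the integrand vanish rapidly as $u \to 0$. The term $-\tfrac{a_0}{(k-1)z}$ then arises from $\tfrac{a_0}{k-1}(Sz)^{k-1} j(S,z)^{k-2}$ and is exactly what is needed to balance the $-a_0 u^{-k}$ contribution under iterated differentiation; this is the conceptual reason for including the correction $\tfrac{a_0}{k-1} z^{k-1}$ in the very definition of $v_f$.
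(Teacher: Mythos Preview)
Your argument is correct, but it proceeds quite differently from the paper. The paper does not reduce to generators; instead, for an arbitrary $\gamma$, it rewrites $\tfrac{a_0}{k-1}z^{k-1}=\int_0^z (w-z)^{k-2}\,dw$, splits the defining integral of $v_f$ at $w=i$, and uses the identity $(w-\gamma z)j(\gamma,z)=(\gamma^{-1}w-z)j(\gamma^{-1},w)$ together with the modularity of $f$ to write $\sigma_f(\gamma)$ as a sum of integrals of $(w-z)^{k-2}$ over \emph{finite} paths (from $\gamma^{-1}i$ to $i$, and from $0$ to $i$), which are visibly in $P_{k-2}$. This gives a closed formula for $\sigma_f(\gamma)$ valid for all $\gamma$ at once, and that formula is immediately reused in the next proposition to identify the cohomology class and to evaluate $\sigma_f(S)$ explicitly in terms of the $\Lambda_f$-values. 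Your approach---handling $T$ by periodicity and $S$ by showing the $(k-1)$-st derivative vanishes---is more elementary and makes transparent why the correction $\tfrac{a_0}{k-1}z^{k-1}$ is exactly what is needed; on the other hand it is tied to the two specific generators and does not by itself produce the explicit integral expression the paper needs downstream. One small point worth making explicit in your write-up: the reduction to generators also covers inverses, since $\sigma_f(g^{-1})=-\sigma_f(g)\big|_{2-k}g^{-1}$.
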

\begin{proof}
We first note that
$z^{k-1}/(k-1)=\int_0^z (w-z)^{k-2}dw.$ Therefore, the value of $\sigma_f(\gamma)=v_f|_{2-k}(\gamma-1)$ at $z$ equals
\begin{align}\label{cocycle0}
\begin{split}
& \left(\int_i^z f(w)(w-z)^{k-2}dw\right)\Big|_{2-k}(\gamma-1)-a_0 \left( \int_i^z (w-z)^{k-2}dw \right)\Big|_{2-k}(\gamma-1)\\
&+
 \int_{\infty}^i (f(w)-a_0)(w-z)^{k-2}dw\Big|_{2-k}(\gamma-1)+a_0 \left( \int_0^z (w-z)^{k-2}dw \right)\Big|_{2-k}(\gamma-1).
 \end{split}
\end{align}
The third term is clearly in $P_{k-2}$. On the first, we use the elementary identity
\begin{equation}\label{(w-z)}
(w-\gamma z) j(\gamma, z)=(\gamma^{-1} w-z)j(\gamma^{-1}, w)
\end{equation}
and the modular invariance of $f$ to get
\begin{align}\label{cocycle}
\begin{split}
&\int_i^{\gamma z} f(w)(w-\gamma z)^{k-2}j(\gamma, z)dw-
\int_i^{ z} f(w)(w- z)^{k-2}dw\\
&=
\int_{i}^{\gamma z} f(\gamma^{-1} w)(\gamma^{-1}w-z)^{k-2}d(\gamma^{-1} w)-
\int_i^{ z} f(w)(w- z)^{k-2}dw=
\int_{\gamma^{-1} i}^{i} f(w)(w- z)^{k-2}dw,
\end{split}
\end{align}
which is in $P_{k-2}.$ The remaining two terms of \eqref{cocycle0} combine to $a_0(\int_0^i (w-z)^{k-2}dw)|_{\gamma-1}$, which is also in $P_{k-2}.$

Further, since $\sigma_f$ is given as the differential of a $1$-cochain, it will satisfy 
the $1$-cocycle relation. 
\end{proof}

This cocycle 
belongs to the cohomology class associated to $f$ under the Eichler Shimura isomorphism
$$\phi: \overline S_k \oplus M_k \xrightarrow{\sim} H^1(\Gamma, P_{k-2}).$$
This isomorphism is induced by the assignment of $f \in M_k$ to the map $\phi(f)$ such that 
$$\phi(f)(\gamma)=\int_{\gamma^{-1} i}^i f(w)(w-z)^{k-2}dw \qquad \text{for $\gamma \in \Gamma$}$$
Furthermore, the value of the cocycle $\sigma_f$ encodes the critical $L$-values. Specifically, we have the following result.
\begin{prop}\label{canon}
If $f \in M_k$, then the following are true.
\begin{enumerate}
\item[i).] The $1$-cocycle $\sigma_f$ is a representative of the cohomology class
of $\phi(f)$.
\item[ii).] For each $z \in \C$, we have
\begin{equation*}
\sigma_f(S)=-i \sum_{j=0}^{k-2} \binom{k-2}{j} (iz)^j \Lambda_f(j+1).
\end{equation*}
\end{enumerate}
\end{prop}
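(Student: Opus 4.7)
The plan is to prove (i) by exhibiting an explicit polynomial $Q \in P_{k-2}$ with $\sigma_f - \phi(f) = d^{0} Q$, and to prove (ii) by computing $\sigma_f(S)$ directly from the definition and matching the result against the Mellin representation \eqref{Mellin} of $\Lambda_f(s)$.

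For part (i), I would first rewrite
\[
v_f(z) = \int_\infty^z(f(w)-a_0)(w-z)^{k-2}\,dw + a_0\int_0^z (w-z)^{k-2}\,dw,
\]
using the identity $z^{k-1}/(k-1) = \int_0^z(w-z)^{k-2}dw$ already employed in the preceding lemma. Splitting the first integral as $\int_\infty^i + \int_i^z$ and the second as $\int_0^i + \int_i^z$, the two $\int_i^z$ pieces combine into $\int_i^z f(w)(w-z)^{k-2}dw$, while the remaining contour integrals over the fixed path from $\infty$ through $i$ to $0$ assemble into the polynomial
\[
Q(z) := \int_\infty^i(f(w)-a_0)(w-z)^{k-2}\,dw + a_0\int_0^i (w-z)^{k-2}\,dw \;\in\; P_{k-2}.
\]
Applying $|_{2-k}(\gamma-1)$ to $v_f(z) = Q(z) + \int_i^z f(w)(w-z)^{k-2}dw$ and using identity \eqref{(w-z)} together with the modular invariance of $f$, exactly as in the calculation \eqref{cocycle}, converts $\int_i^z f(w)(w-z)^{k-2}dw|_{\gamma-1}$ into $\int_{\gamma^{-1}i}^i f(w)(w-z)^{k-2}dw = \phi(f)(\gamma)$. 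Hence $\sigma_f(\gamma) = \phi(f)(\gamma) + (Q|_{2-k}(\gamma-1))(z)$, so $\sigma_f - \phi(f) = d^{0}Q$ is a coboundary, establishing (i).

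For part (ii), I would compute $\sigma_f(S)(z) = v_f(-1/z)z^{k-2} - v_f(z)$ by substituting $w = -1/u$ in each of the two integrals contributing to $v_f(-1/z)z^{k-2}$, exploiting $(wz+1)^{k-2} = ((u-z)/u)^{k-2}$, $dw = du/u^2$, and the modularity $f(-1/u) = u^k f(u)$. After cancellations with the corresponding pieces of $v_f(z)$, all four integrals reassemble along the positive imaginary axis into the single regularized expression
\[
\sigma_f(S)(z) = \int_0^\infty \bigl[f(u) - a_0 - a_0 u^{-k}\bigr](u-z)^{k-2}\,du,
\]
whose bracketed integrand decays exponentially near both $0$ (via the modular transformation) and $\infty$ (via the $q$-expansion). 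I would then parametrize $u = iv$, expand $(iv-z)^{k-2}$ by the binomial theorem, split $\int_0^\infty = \int_0^1 + \int_1^\infty$, and apply $v \mapsto 1/v$ together with $f(i/v) = i^k v^k f(iv)$ and $i^{k-2} = -i^k$ to the portion over $(0,1)$. The four resulting terms in the coefficient of $\binom{k-2}{j}(-z)^j$ assemble precisely into the two-integral expression \eqref{Mellin} for $\Lambda_f(j+1)$, multiplied by $-i^{-j}$; then the identity $(-z)^j i^{-j} = (iz)^j$ produces the claimed formula.

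The main technical obstacle is the bookkeeping forced by a nonzero constant term $a_0$: the naive integral $\int_0^\infty f(u)(u-z)^{k-2}du$ diverges at both endpoints, so the two counterterms $a_0$ (regularizing at $\infty$) and $a_0 u^{-k}$ (regularizing at $0$) must be carried through the substitutions $w = -1/u$ and $v \mapsto 1/v$ so that they are eventually paired with the $-a_0/s$ and $-a_0 i^k/(k-s)$ contributions in \eqref{Mellin}. The functional equation \eqref{FE} is not invoked directly but is implicit in the symmetric structure of \eqref{Mellin} and in the coincidence of the two $\int_1^\infty$ integrals produced by the splitting.
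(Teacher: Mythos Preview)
Your proof of part (i) is essentially identical to the paper's: you identify the same polynomial
\[
Q(z)=\int_{\infty}^{i}(f(w)-a_0)(w-z)^{k-2}\,dw+a_0\int_{0}^{i}(w-z)^{k-2}\,dw
\]
and show $\sigma_f=\phi(f)+d^{0}Q$, exactly as the paper does via \eqref{Eichlercocycle}.

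For part (ii) you take a slightly different but equally valid route. The paper simply specializes \eqref{Eichlercocycle} to $\gamma=S$, uses that $Si=i$ so $\phi(f)(S)=0$, and is left with $Q|_{2-k}(S-1)$, which it then expands binomially over the single contour $[i,i\infty)$ and matches term by term with \eqref{Mellin}. You instead return to the definition $v_f|_{2-k}(S-1)$ and, via the substitution $w=-1/u$ and modularity, derive the closed regularized formula
\[
\sigma_f(S)(z)=\int_{0}^{i\infty}\bigl[f(u)-a_0-a_0u^{-k}\bigr](u-z)^{k-2}\,du,
\]
before splitting at $i$ (equivalently at $v=1$), inverting, and matching with \eqref{Mellin}. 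The paper's approach is marginally more economical because it reuses the decomposition already obtained in (i) and never needs the second counterterm $a_0u^{-k}$; your approach buys a clean single-integral expression for $\sigma_f(S)$ that makes the regularization at both endpoints manifest. The endgame (binomial expansion and identification with \eqref{Mellin}) is the same in both.
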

\begin{proof}
We begin with the proof of i). Using \eqref{cocycle0} and \eqref{cocycle}, we see that 
\begin{align}\label{Eichlercocycle}
\begin{split}
\sigma_f(\gamma)
&=\int_{\gamma^{-1} i}^{i} f(w)(w- z)^{k-2}dw\\
&+
\left(\int_{\infty}^i (f(w)-a_0)(w-z)^{k-2}dw
+a_0 \int_0^i (w-z)^{k-2}dw \right)\Bigg|_{2-k}(\gamma-1)
.
\end{split}
\end{align}
Since the part inside the parentheses is in $P_{k-2}$, the second row of \eqref{Eichlercocycle} is a coboundary and thus
$\sigma_f$ differs from $\phi(f)$ by a coboundary.

We now turn to the proof of ii). Setting $\gamma=S$ in \eqref{Eichlercocycle}, we see that 
$\sigma_f(S)$ equals 
$$\int_{\infty}^i (f(w)-a_0)\left ( (wz+1)^{k-2}-(w-z)^{k-2} \right )dw
+\frac{a_0}{k-1}\left((i-z)^{k-1}-(-z)^{k-1} \right)\Big|_{2-k}(S-1).$$
The binomial expansion and \eqref{Mellin} imply the identity after an elementary calculation. 
\end{proof}

\bf Remark \rm When $f=E_k$, the value of $\sigma_f$ at $S$ is essentially the period polynomial $\textrm{p} (E_k)$ 
which plays an important role in \cite{Br}. As pointed out in
\cite{Br}, this differs from the ``extended period polynomial" of \cite{Z} in that we remain 
in $P_{k-2}$, whereas in \cite{Z}, the representation has to be extended to a larger space 
that includes non-polynomial functions. This is achieved by the addition of the term 
$\frac{a_0}{k-1} z^{k-1}$ in the definition of $v_f$, which at first may seem unnatural.

From this viewpoint, \eqref{Eichlercocycle} can be thought of as a formula for 
the Eichler cocycle whose specialization at $S$ is  Brown's period polynomial of 
an Eisenstein series.  

\subsection{Cocycles associated to values of derivatives of $L$-functions}\label{derppolyE}
We maintain the notation of the previous section. 
We further set 
$u(\tau):=\log(\eta(\tau))$
where $$\eta(\tau):=e^{\frac{2 \pi i \tau}{24}} \prod_{n=0}^{\infty} (1- e^{2 \pi i n \tau})$$
is the Dedekind eta function. For each $\gamma \in \Gamma$, this function satisfies
\begin{equation}\label{v}
u(\gamma \tau)=u(\tau)+\log (j(\gamma, \tau))+c_{\gamma}
\end{equation}
for some $c_{\gamma} \in \C$. In particular, $c_S=-\frac{\pi i}{2}$.

Define the cochain $v_f \in C^1(\Gamma, \mathcal{O})$ by
\begin{align*}
v_f(\gamma)
&
:=\int_{\infty}^z (f(w)-a_0)(w-z)^{k-2} 
\left ( u(\gamma w)-u(w) \right )dw
\\
&+
a_0 \int_i^z (w-z)^{k-2}\left ( u(\gamma w)-u(w) \right )dw.
\end{align*}
Set $\sigma_f:=d^1v_f.$
In this case, $\sigma_f=d^1v_f $ is a $2$-cocycle in Eichler cohomology.
\begin{lem}\label{Lemfirst}The map $\sigma_f$ 
takes values in $P_{k-2}$ and thus gives a $2$-cocycle in $P_{k-2}$.
\end{lem}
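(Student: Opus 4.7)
The plan is to mimic the proof of the preceding lemma by first rewriting $v_f(\gamma)$ so that one piece is manifestly in $P_{k-2}$ and the residual piece is easy to analyze, and then computing $d^1 v_f$ by combining the change-of-variables identity \eqref{(w-z)}, the modular invariance of $f$, and a shift-cocycle-type relation for $u$ implied by \eqref{v}.

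First, I would split the integral in the first summand of $v_f(\gamma)$ at $i$ and combine its $\int_i^z$-portion with the $a_0$-term, producing the decomposition $v_f(\gamma) = A(\gamma) + B(\gamma)$ with
\[
A(\gamma)(z) := \int_\infty^i (f(w)-a_0)(w-z)^{k-2}\bigl(u(\gamma w)-u(w)\bigr)\,dw,
\]
\[
B(\gamma)(z) := \int_i^z f(w)(w-z)^{k-2}\bigl(u(\gamma w)-u(w)\bigr)\,dw.
\]
Since the bounds in $A(\gamma)$ do not depend on $z$ and $(w-z)^{k-2}$ is a polynomial in $z$ of degree $\le k-2$, the map $A$ takes values in $P_{k-2}$; convergence of the defining integral follows from the exponential decay of $f-a_0$ near $\infty$ against the at-most-logarithmic growth of $u(\gamma w)-u(w)=\log j(\gamma,w)+c_\gamma$ provided by \eqref{v}. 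In particular $d^1 A$ automatically takes values in $P_{k-2}$, so it suffices to show the same for $d^1 B$.

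Set $U_\gamma(w):=u(\gamma w)-u(w)$. A direct computation using \eqref{v} yields the shift-cocycle identity
\[
U_{\gamma_2\gamma_1}(w) = U_{\gamma_2}(\gamma_1 w) + U_{\gamma_1}(w).
\]
Applying the substitution $w\mapsto \gamma_1 w$ in $B(\gamma_2)|_{2-k}\gamma_1(z)$, exactly as in \eqref{cocycle} (using \eqref{(w-z)} and the modular invariance of $f$), transforms this into
\[
B(\gamma_2)|_{2-k}\gamma_1 (z) = \int_{\gamma_1^{-1} i}^z f(w)(w-z)^{k-2} U_{\gamma_2}(\gamma_1 w)\,dw.
\]
The shift-cocycle identity gives $B(\gamma_2\gamma_1)(z) = B(\gamma_1)(z) + \int_i^z f(w)(w-z)^{k-2} U_{\gamma_2}(\gamma_1 w)\,dw$, so the combination $-B(\gamma_2\gamma_1)+B(\gamma_1)$ cancels precisely the $\int_i^z$ portion of $B(\gamma_2)|_{2-k}\gamma_1$, leaving
\[
(d^1 B)(\gamma_1,\gamma_2)(z) = \int_{\gamma_1^{-1} i}^{i} f(w)(w-z)^{k-2} U_{\gamma_2}(\gamma_1 w)\,dw,
\]
an integral whose bounds are independent of $z$ and whose path can be chosen compactly in $\HH$ (where $f$ and $U_{\gamma_2}\circ\gamma_1$ are bounded). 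Expanding $(w-z)^{k-2}$ in powers of $z$ exhibits this as an element of $P_{k-2}$, and since $d^2 d^1=0$ the $2$-cocycle condition is automatic.

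The one point where care is required --- rather than a real obstacle --- is fixing a consistent branch of $u=\log\eta$ on $\HH$ so that $U_\gamma$ is a well-defined holomorphic function on $\HH$ and the shift-cocycle relation is an honest equality (not merely modulo $2\pi i\ZZ$). Once this is set up, all of the logarithmic terms and the additive constants $c_\gamma$ cancel cleanly in the combination $(d^1 B)(\gamma_1,\gamma_2)$, yielding the polynomial answer claimed above.
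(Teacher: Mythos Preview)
Your proof is correct and follows essentially the same route as the paper's: both split $v_f(\gamma)$ into the $\int_\infty^i$ piece (manifestly in $P_{k-2}$) and the $\int_i^z$ piece, then handle the latter via the change of variables from \eqref{(w-z)} together with the modularity of $f$ and the telescoping identity $U_{\gamma_2\gamma_1}=U_{\gamma_2}\circ\gamma_1+U_{\gamma_1}$ to arrive at an integral over $[\gamma_1^{-1}i,i]$. Your explicit isolation of the shift-cocycle relation and your remark on the branch of $\log\eta$ are welcome clarifications, and your lower bound $\gamma_1^{-1}i$ is in fact the correct one (the paper's $\gamma_1 i$ in this lemma is a typo, as confirmed by the parallel computation in Lemma~\ref{mthderiv}).
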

\begin{proof}
The value of $v_f(\gamma_2)$ at $z$ equals
$$\int_i^{z} f(w) (w-z)^{k-2} \left ( u(\gamma_2 w)-u(w) \right )dw+
\int_{\infty}^i (f(w)-a_0) (w-z)^{k-2} \left ( u(\gamma_2 w)-u(w) \right )dw.$$
The image of the second term under the differential $d^1$ (see \eqref{differential}) is clearly in $P_{k-2}.$
The image of the first term equals
\begin{align*}
\begin{split}
& \int_i^{\gamma_1 z} f(w)(w-\gamma_1 z)^{k-2}j(\gamma_1, z)^{k-2} \left ( u(\gamma_2 w)-u(w) \right )dw \\
& -\int_i^{z} f(w)(w-z)^{k-2} \left ( u(\gamma_2 \gamma_1 w)-u(\gamma_1 w) \right )dw.
\end{split}
\end{align*}
With \eqref{(w-z)} and the modularity of $f$, we deduce that this equals
$$-\int_i^{\gamma_1 i} f(w)(w-z)^{k-2} \left ( u(\gamma_2 \gamma_1 w)-u(\gamma_1 w) \right )dw,$$
which is in $P_{k-2}.$
\end{proof}
The next proposition shows that when $f$ is cuspidal, $\sigma_f$ coincides with the cocycle 
associated to derivatives of $L$-functions 
in \cite{D}.
\begin{prop} Let $f$ be a cusp form of weight $k$ for $\Gamma$.  Then 
\begin{align*}
\begin{split}
\sigma_f(\gamma_1, \gamma_2)
&=\int_{\gamma^{-1}_1 \infty}^{\infty} f(w)(w-z)^{k-2}(u(\gamma_2 w)-u(w))dw\\
&=\int_{\infty}^{\gamma_1 \infty} f(w)(w-z)^{k-2}(u(\gamma_2 w)-u(w))dw\Big |_{2-k}\gamma_1.
\end{split}
\end{align*}
\end{prop}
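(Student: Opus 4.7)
The plan is to compute $\sigma_f(\gamma_1,\gamma_2) = (d^1 v_f)(\gamma_1,\gamma_2)$ directly from the bar resolution formula \eqref{differential}. Since $f$ is cuspidal, $a_0=0$ and the definition of $v_f$ collapses to
\[
v_f(\gamma)(z)=\int_\infty^z f(w)(w-z)^{k-2}\bigl(u(\gamma w)-u(w)\bigr)\,dw,
\]
so that
\[
\sigma_f(\gamma_1,\gamma_2) = v_f(\gamma_2)\cdot\gamma_1 - v_f(\gamma_2\gamma_1) + v_f(\gamma_1).
\]
The two claimed expressions for $\sigma_f$ differ only by an application of $|_{2-k}\gamma_1$, which will correspond to the substitution $w\mapsto\gamma_1 w$; my plan is to establish one form and then read off the other from this substitution.

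The key step is to rewrite $v_f(\gamma_2)\cdot\gamma_1$. I would start from $v_f(\gamma_2)\cdot\gamma_1(z) = v_f(\gamma_2)(\gamma_1 z)\,j(\gamma_1,z)^{k-2}$, then use the identity \eqref{(w-z)} to trade $(w-\gamma_1 z)^{k-2}j(\gamma_1,z)^{k-2}$ for $(\gamma_1^{-1}w-z)^{k-2}j(\gamma_1^{-1},w)^{k-2}$ inside the integrand. Performing the substitution $w\mapsto\gamma_1 w$ and exploiting (a) modularity, $f(\gamma_1 w)=f(w)j(\gamma_1,w)^k$, (b) the cocycle identity $j(\gamma_1^{-1},\gamma_1 w)=j(\gamma_1,w)^{-1}$, and (c) the Jacobian $dw/j(\gamma_1,w)^2$, all the powers of $j(\gamma_1,w)$ cancel exactly. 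The upper limit $\gamma_1 z$ pulls back to $z$ and the lower limit $\infty$ to $\gamma_1^{-1}\infty$, giving
\[
v_f(\gamma_2)\cdot\gamma_1(z) = \int_{\gamma_1^{-1}\infty}^{z} f(w)(w-z)^{k-2}\bigl(u(\gamma_2\gamma_1 w)-u(\gamma_1 w)\bigr)\,dw.
\]

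Subtracting $v_f(\gamma_2\gamma_1)(z)$ and adding $v_f(\gamma_1)(z)$, I would use the telescoping $u(\gamma_2\gamma_1 w)-u(\gamma_1 w) = \bigl(u(\gamma_2\gamma_1 w)-u(w)\bigr)-\bigl(u(\gamma_1 w)-u(w)\bigr)$ to see that the two integrals over $[\infty,z]$ combine to exactly cancel the integral from $\gamma_1^{-1}\infty$ to $z$ along its $[\infty,z]$ piece, leaving only the $[\gamma_1^{-1}\infty,\infty]$ segment. This yields the first displayed equality, and reversing the substitution $w\mapsto\gamma_1 w$ rewrites it as an $|_{2-k}\gamma_1$ applied to the integral over $[\infty,\gamma_1\infty]$, giving the second equality. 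The main obstacle is the careful bookkeeping of the several factors of $j(\gamma_1,\cdot)$ produced by modularity, \eqref{(w-z)}, and the change of variables, together with the need to track the branches of the logarithm in $u$ consistently across the transformed contour so that the relation \eqref{v} can be used unambiguously.
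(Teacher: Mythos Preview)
Your approach is correct and essentially identical to the paper's: both start from the simplified cuspidal form of $v_f$, expand $d^1v_f$ via the bar resolution, use \eqref{(w-z)} together with the modularity of $f$ and the substitution $w\mapsto\gamma_1 w$ to rewrite $v_f(\gamma_2)|_{2-k}\gamma_1$, and then telescope the remaining two terms so that only the segment $[\gamma_1^{-1}\infty,\infty]$ survives; the second displayed formula is obtained, as you say, by undoing that substitution. Your worry about branches of $u$ is unnecessary here, since the argument never invokes \eqref{v} and only uses the differences $u(\gamma w)-u(w)$ as given holomorphic functions.
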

\begin{proof} When $f$ is cuspidal, then
$$v_f(\gamma)=\int_{\infty}^z f(w)(w-z)^{k-2} 
\left ( u(\gamma w)-u(w) \right )dw$$
and this implies that
\begin{align*}
\begin{split}
\sigma_f(\gamma_1, \gamma_2)
&=\int_{\infty}^{\gamma_1 z} f(w)(w-\gamma_1 z)^{k-2}j(\gamma_1, z)^{k-2} 
\left ( u(\gamma_2 w)-u(w) \right )dw \\
&-\int_{\infty}^z f(w)(w-z)^{k-2}
\left ( u(\gamma_2 \gamma_1 w)-u(\gamma_1 w) \right )dw.
\end{split}
\end{align*}
Equation \eqref{(w-z)} and the modularity of $f$ imply that, after the change of variables $\gamma^{-1} w \to w$, the first integral equals
$$\int_{\gamma_1^{-1}\infty}^{z} f(w)(w-z)^{k-2} 
\left ( u(\gamma_2 \gamma_1 w)-u(\gamma_1 w) \right )dw,$$
which gives the first equality. The second follows from a change of variables and \eqref{(w-z)}.
\end{proof}
 We will now show that, up to a simple multiple of a fixed polynomial, $\sigma_f$ encodes 
the values of derivatives of the $L$-function of $f$ inside the critical strip just as the cuspidal analogue in 
\cite{D} did.
\begin{prop}\label{1stder}Set $$P(z)=
\sum_{n=0}^{k-2} \binom{k-2}{n} \frac{i^{1-n}}{(n+1)^{2}}z^{k-2-n}.$$ Then
$$\sigma_f(S, S)=
-\sum_{n=0}^{k-2} \binom{k-2}{n} i^{1-n}\Lambda_f'(n+1)z^{k-2-n}
+a(0) (P|_{2-k}(1+S))(z).$$
\end{prop}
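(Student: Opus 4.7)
The plan is to compute $\sigma_f(S,S) = (d^1 v_f)(S,S)$ directly from the definition. Specializing the coboundary formula \eqref{differential} to $i=1$ and $g_1=g_2=S$ gives
$$\sigma_f(S,S) \;=\; v_f(S)\big|_{2-k}S \;-\; v_f(S^2) \;+\; v_f(S) \;=\; v_f(S)\big|_{2-k}(1+S) \;-\; v_f(-I).$$
The first reduction is to note that $v_f(-I)=0$: since $-I$ acts trivially on $\HH$ and $u=\log\eta$ is a single-valued holomorphic function on the simply connected domain $\HH$, the factor $u(-Iw)-u(w)$ in both integrals defining $v_f(-I)$ vanishes identically. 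It therefore suffices to identify the polynomial $v_f(S)\big|_{2-k}(1+S)$.

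Next, using \eqref{v} with $j(S,w)=w$ and $c_S=-i\pi/2$, the factor $u(Sw)-u(w)$ becomes $\log w-i\pi/2$. I would split the integration over $[\infty,z]$ in the first piece of $v_f(S)$ at the point $i$ and merge the $a_0$-contributions with the second piece of $v_f$, obtaining the cleaner decomposition
$$v_f(S)(z) \;=\; \underbrace{\int_\infty^i (f(w)-a_0)(w-z)^{k-2}(\log w - i\pi/2)\,dw}_{=:\,C(z)} \;+\; \underbrace{\int_i^z f(w)(w-z)^{k-2}(\log w - i\pi/2)\,dw}_{=:\,D(z)}.$$
The key observation is that the ``cuspidal part'' $D$ is annihilated by $1+S$. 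To check $D(-1/z)z^{k-2}=-D(z)$, I would apply the substitution $w\mapsto -1/w$ in the defining integral, invoke the modularity $f(-1/w)=w^k f(w)$, and use the branch identity $\log(-1/w)=-\log w + i\pi$ (valid for $w\in\HH$); after the telescoping of powers of $w$, the factor $\log w - i\pi/2$ transforms to $-(\log w - i\pi/2)$ and the integral reverses sign.

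For the remaining Eisenstein part, I would parametrize $w=iv$ with $v\in[1,\infty)$, so that $\log(iv)-i\pi/2=\log v$, and expand both $(w-z)^{k-2}$ (appearing in $C(z)$) and $(wz+1)^{k-2}$ (appearing in $C(-1/z)z^{k-2}$) via the binomial theorem. Collecting powers of $z$ reduces $C(z)+C(-1/z)z^{k-2}$ to a linear combination of integrals of the form $\int_1^\infty (f(iv)-a_0)v^{s-1}\log v\,dv$ for $s\in\{n+1,\,k-n-1\}$. Differentiating the Mellin-type formula \eqref{Mellin} in $s$ identifies the relevant combination of these integrals with
$$\Lambda_f'(s) \;-\; \frac{a_0}{s^2} \;+\; \frac{a_0\, i^k}{(k-s)^2},$$
which cleanly splits each coefficient into a main $\Lambda_f'$-contribution plus two explicit $a_0$-error terms.

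Finally, I would match this with the claimed right-hand side by reindexing $n\mapsto k-2-n$ and invoking the differentiated functional equation $\Lambda_f'(s)=-i^k\Lambda_f'(k-s)$ coming from \eqref{FE}: the main contribution reassembles into $-\sum_n\binom{k-2}{n}i^{1-n}\Lambda_f'(n+1)z^{k-2-n}$, while the two $a_0$-error sums become, respectively, $a_0\cdot P(z)$ and $a_0\cdot(P|_{2-k}S)(z)$, yielding exactly $a_0(P|_{2-k}(1+S))(z)$. Conceptually, the only three ingredients are $v_f(-I)=0$, the modular cancellation $D+D|_{2-k}S=0$, and the differentiated Mellin/functional-equation identities; the main obstacle is purely bookkeeping the signs, powers of $i$, and branches of $\log$ that propagate through the two changes of variable and the two reindexings.
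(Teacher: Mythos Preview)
Your proposal is correct and follows essentially the same route as the paper: both hinge on differentiating the Mellin formula \eqref{Mellin}, expanding $(w-z)^{k-2}$ and $(wz+1)^{k-2}$ binomially, and identifying the resulting pair of integrals $\int_i^{i\infty}(f(w)-a_0)\{(z-w)^{k-2}+(zw+1)^{k-2}\}(u(Sw)-u(w))\,dw$ with $-\sigma_f(S,S)$, the $a_0$-terms assembling into $P|_{2-k}(1+S)$. The only difference is direction and detail---the paper runs the computation from the $\Lambda_f'$-sum toward $\sigma_f(S,S)$ and asserts the last identification without comment, whereas you start from $\sigma_f(S,S)=(d^1v_f)(S,S)$ and make the reductions $v_f(-I)=0$ and $D|_{2-k}(1+S)=0$ explicit, which is exactly what underlies the paper's final step.
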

\begin{proof} Upon differentiation of \eqref{Mellin}, we obtain 
$$\Lambda'_f(s)=\int_1^{\infty}(f(iv)-a(0))v^{s-1}\log (v) dv-
i^k \int_1^{\infty}(f(iv)-a(0))v^{k-s-1}\log (v) dv+
\frac{a(0)}{s^2}-\frac{a(0)i^k}{(k-s)^2}.$$
Equation \eqref{v} gives $\log(v)=u(S iv)-u(iv)$, and so
\begin{align*} 
&
\sum_{n=0}^{k-2} \binom{k-2}{n} i^{1-n}\Lambda_f'(n+1)z^{k-2-n}
=\\
&
\sum_{n=0}^{k-2} \binom{k-2}{n} i^{1-n}z^{k-2-n} \left(
i^{-n-1}\int_i^{i \infty}(f(w)-a(0)) w^n (u(S w)-u(w))dw\right.\\
&
\left.
-i^{n+1}\int_i^{i \infty}(f(w)-a(0)) w^{k-n-2} (u(S w)-u(w))dw  
- a(0) \left ( \frac{i^k}{(k-n-1)^2}-\frac{1}{(n+1)^2} \right ) \right )\\
&=
\int_i^{i \infty}(f(w)-a(0)) (z-w)^{k-2} (u(S w)-u(w))dw
\\
&
+\int_i^{i \infty}(f(w)-a(0)) (zw+1)^{k-2} (u(S w)-u(w))dw \\
&
+a(0) (P(z)+P(-1/z)z^{k-2})=-\sigma_f(S, S)+a(0)(P|_{2-k}(1+S))(z).
\end{align*}
\end{proof}
\bf Remark. \rm It is possible to eliminate $P$ from the statement Proposition~\ref{1stder} 
by modifying the definition of $v_f$ to match, in some respects, even more
perfectly the $v_f$ we assigned to the values of $L$-functions in the
previous subsection. However, the formula would become more complicated without
an obvious benefit.

\subsection{Higher derivatives}
We can now extend the construction above to account for all derivatives of $L$-functions.
It will be convenient to use the group cohomology formalism of cup products. We consider the cup product map
$$\cup\colon C^1(\Gamma, \mathcal O) \otimes
C^m(\Gamma, \mathcal O) \to C^{m+1}(\Gamma, \mathcal O) $$
given by 
$$\left ( \phi_1 \cup \phi_2 \right )(\gamma_1, \gamma_2, \dots, \gamma_{m+1}):=\phi_1(\gamma_1) \left ( \phi(\gamma_2, \dots, \gamma_{m+1})|_0 \gamma_1\right ).$$
For $\phi_i \in C^1(\Gamma, \mathcal{O})$, we consider the iterated product:
$$\phi_1 \cup \dots \cup \phi_n:=\phi_1 \cup \left(\phi_2 \cup \left( \dots ( \phi_{n-1} \cup \phi_n ) \dots \right)\right) \in C^n(\Gamma, \mathcal O).$$
An important property is that that cup products of cocycles are cocycles.

With this notation, we set, for $n \in \mathbb N$, 
$$V_n:=v \cup v \cup \dots \cup v \qquad \text{($n$ times)}$$
where $v$ is $1$-cocycle given by
$\gamma \to u|_0(\gamma-1)$ (with $u$ as in the last subsection). 
As mentioned above, this will be a $n$-cocycle.

Let $v_f \in C^n(\Gamma, \mathcal{O})$ be given by
\begin{align*}
v_f(\gamma_1, \dots, \gamma_n)
&=\int_{\infty}^z (f(w)-a_0)(w-z)^{k-2} V_n(\gamma_1, \dots, \gamma_n)(w)dw\\
&+ a_0\int_{i}^z (w-z)^{k-2} V_n(\gamma_1, \dots, \gamma_n)(w)dw.
\end{align*}
Setting $\sigma_f:=d^n v_f$, we arrive at the following analogue of Lemma~\ref{Lemfirst} for higher cocycles.

\begin{lem}\label{mthderiv} The map $\sigma_f$ 
takes values in $P_{k-2}$ and thus gives an $(n+1)$-cocycle in $P_{k-2}$.
\end{lem}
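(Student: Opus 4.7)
The plan is to reduce the argument to the pattern of Lemma~\ref{Lemfirst} by splitting $v_f$ into a piece with $z$-independent bounds and a piece handled exactly as in the $n=1$ case. I would write $v_f = A + B$ using $\int_\infty^z = \int_\infty^i + \int_i^z$, with
\[
A(\gamma_1,\dots,\gamma_n)(z) := \int_i^z f(w)(w-z)^{k-2} V_n(\gamma_1,\dots,\gamma_n)(w)\,dw
\]
and
\[
B(\gamma_1,\dots,\gamma_n)(z) := \int_{\infty}^i (f(w)-a_0)(w-z)^{k-2} V_n(\gamma_1,\dots,\gamma_n)(w)\,dw.
\]
For each fixed tuple of arguments, $B$ is already a polynomial in $z$ of degree at most $k-2$, and since $|_{2-k}$ preserves $P_{k-2}$, the contribution $d^n B$ automatically lies in $C^{n+1}(\Gamma,P_{k-2})$.

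For $d^n A$, the only summand of the bar differential whose integration bound still depends on $\gamma_1$ is $A(\gamma_2,\dots,\gamma_{n+1})|_{2-k}\gamma_1$. On this term I would substitute $w \mapsto \gamma_1 w$ and apply \eqref{(w-z)} together with $j(\gamma_1^{-1},\gamma_1 w) = j(\gamma_1,w)^{-1}$ and the modularity of $f$, just as in the proofs of Lemma~\ref{Lemfirst} and Proposition~\ref{1stder}. The powers of $j(\gamma_1,w)$ contributed by $f(\gamma_1 w)$, by \eqref{(w-z)}, and by the Jacobian cancel exactly, yielding
\[
A(\gamma_2,\dots,\gamma_{n+1})\bigm|_{2-k}\gamma_1(z) = \int_{\gamma_1^{-1}i}^z f(w)(w-z)^{k-2}\bigl(V_n(\gamma_2,\dots,\gamma_{n+1})\bigm|_0\gamma_1\bigr)(w)\,dw.
\]

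At this point I would use the key structural input: since $v(\gamma) = u|_0(\gamma-1)$ is a coboundary (hence a cocycle) and cup products of cocycles are cocycles, $V_n$ is itself an $n$-cocycle for the action $|_0$. The identity $d^n V_n = 0$ expresses $V_n(\gamma_2,\dots,\gamma_{n+1})|_0\gamma_1$ as $-\sum_{j=1}^n (-1)^j V_n(\gamma_1,\dots,\gamma_{j+1}\gamma_j,\dots,\gamma_{n+1}) - (-1)^{n+1}V_n(\gamma_1,\dots,\gamma_n)$. After plugging this in, each resulting integrand coincides (up to sign) with an integrand appearing in the remaining summands of $d^n A$, which are integrals of the same expressions from $i$ to $z$. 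The $z$-dependent upper limits then cancel in pairs, leaving a sum of integrals from $\gamma_1^{-1}i$ to $i$; since these bounds are independent of $z$, every such integral is a polynomial of degree at most $k-2$ in $z$. Combining with $d^n B$ gives $\sigma_f \in C^{n+1}(\Gamma, P_{k-2})$, and as a coboundary it is automatically a cocycle.

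The main obstacle I expect is purely combinatorial bookkeeping: verifying that the $n+2$ summands of $(d^n A)(\gamma_1,\dots,\gamma_{n+1})$ match, with matching signs and index patterns, the $n+2$ terms produced by the cocycle relation for $V_n$, so that the cancellation of the $z$-dependent bounds is exact. The geometric ingredients---the identity \eqref{(w-z)}, the weight-$k$ modularity of $f$, and the cocycle property of $v$---are exactly those already used in the $n=1$ proof; what is new is that $V_n$ enters only through its abstract status as a cocycle, which is precisely what makes the argument uniform in $n$.
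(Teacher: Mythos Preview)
Your proposal is correct and follows essentially the same approach as the paper's proof: the same splitting $v_f = A + B$ via $\int_\infty^z = \int_\infty^i + \int_i^z$, the same use of \eqref{(w-z)} and modularity to move the $\gamma_1$-action inside the integrand, and the same appeal to the $n$-cocycle relation for $V_n$ to collapse the $z$-dependent bounds. The paper phrases the cancellation by rewriting the bracketed sum $\sum_j(-1)^j V_n(\ldots)+(-1)^{n+1}V_n(\ldots)$ as $-V_n(\gamma_2,\dots,\gamma_{n+1})(\gamma_1 w)$ and then obtains a single integral $\int_{\gamma_1^{-1}i}^i$, whereas you run the same identity in the opposite direction; the content is identical.
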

\begin{proof}
The value of $v_f(\gamma_2, \dots, \gamma_{n+1})$ at $z$ equals
\begin{equation}
\label{decomp_n}
\int_i^z f(w) (w-z)^{k-2} V_n(\gamma_2, \dots, \gamma_{n+1})(w)dw+
\int_{\infty}^i (f(w)-a_0) (w-z)^{k-2} V_n(\gamma_2, \dots, \gamma_{n+1})(w)dw.
\end{equation}
The image of the second term under the differential $d^{n}$ is clearly in $P_{k-2}.$
The image of the first term equals
\begin{multline}\label{firstterm}
\int_i^{\gamma_1 z} f(w)(w-\gamma_1 z)^{k-2}j(\gamma_1, z)^{k-2} V_n(\gamma_2, \dots, \gamma_{n+1})(w)dw 
+\int_i^z f(w)(w-z)^{k-2} \times \\
\left \{ \sum_{j=1}^n (-1)^j V_n(\gamma_1,\dots, \gamma_{j+1}\gamma_j, \dots, \gamma_{n+1})(w)+(-1)^{n+1}V_n(\gamma_1,\dots,\gamma_{n})(w) \right \} dw.
\end{multline}
Since $V_n$ is a $n$-cocycle in terms of the action of $|_0$ on $\mathcal O$, the part inside the curly brackets equals
$$-V_n(\gamma_2, \dots, \gamma_{n+1})(\gamma_1 w).$$
On the other hand, \eqref{(w-z)} and the modularity of $f$ imply that the first integral in \eqref{decomp_n} equals
$$\int_{\gamma_1^{-1} i}^z f(w)(w-z)^{k-2} V_n(\gamma_2, \dots, \gamma_{n+1})(\gamma_1 w)dw.$$
Thus, \eqref{firstterm} equals
$$\int_{\gamma_1^{-1} i}^i f(w)(w-z)^{k-2} V_n(\gamma_2, \dots, \gamma_{n+1})(\gamma_1 w)dw,$$
which is in $P_{k-2}.$
\end{proof}
Finally, we describe the relation with the higher derivatives of $L$-functions in the critical strip.
\begin{prop}\label{higherderivative} For each $m \in \mathbb{N}$, set $$P(z)=
\sum_{n=0}^{k-2} \binom{k-2}{n} \frac{i^{1-n}}{(-n-1)^{m+1}}z^{k-2-n}.$$ Then
\begin{equation}\label{Q_f}
(-1)^{m}\sigma_f(S, \dots S)=
\sum_{n=0}^{k-2} \binom{k-2}{n} i^{1-n}\Lambda_f^{(m)}(n+1)z^{k-2-n}
-a(0) m! (P|_{2-k}(1+(-1)^{m+1}S))(z),
\end{equation}
where $\sigma_f$ has $m+1$ arguments. 
\end{prop}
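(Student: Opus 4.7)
The plan is to generalize the proof of Proposition~\ref{1stder}, splitting the argument into a simplification of $\sigma_f(S,\dots,S)$ on the left and a direct computation of $\sum_n\binom{k-2}{n}i^{1-n}\Lambda_f^{(m)}(n+1)z^{k-2-n}$ on the right starting from \eqref{Mellin}.

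For the left-hand side, since $\sigma_f = d^m v_f$ the value at $(S,\dots,S)$ (with $m+1$ entries) splits into $v_f(S,\dots,S)|_{2-k}S$, the alternating middle sum $\sum_{j=1}^m(-1)^j v_f(S,\dots,S^2,\dots,S)$, and $(-1)^{m+1}v_f(S,\dots,S)$. Every term of the middle sum vanishes identically: each $V_m$ now has $-I$ in some slot, say position $\ell$, and then the $\ell$-th factor in the telescoping product $V_m(\gamma_1,\dots,\gamma_m)(w) = \prod_{l=1}^m\bigl(u(\gamma_l\cdots\gamma_1 w) - u(\gamma_{l-1}\cdots\gamma_1 w)\bigr)$ equals $u(-I Aw) - u(Aw) = 0$ (where $A = \gamma_{\ell-1}\cdots\gamma_1$), since $-I$ fixes every point of $\HH$. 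Thus
\[
(-1)^m \sigma_f(S,\dots,S) = -v_f(S,\dots,S)|_{2-k}\bigl(1 + (-1)^{m+1}S\bigr).
\]

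For the right-hand side, differentiate \eqref{Mellin} $m$ times to obtain
\begin{align*}
\Lambda_f^{(m)}(s) &= \int_1^\infty(f(iv)-a_0)v^{s-1}(\log v)^m\,dv + (-1)^m i^k\int_1^\infty(f(iv)-a_0)v^{k-s-1}(\log v)^m\,dv \\
&\quad + \frac{(-1)^{m+1}a_0\, m!}{s^{m+1}} - \frac{a_0\, i^k m!}{(k-s)^{m+1}}.
\end{align*}
Setting $s = n+1$, multiplying by $\binom{k-2}{n}i^{1-n}z^{k-2-n}$, summing, and changing variable $w=iv$, use the identity $\log v = u(Sw)-u(w)$ on the positive imaginary axis (from \eqref{v} together with $j(S,w)=w$ and $c_S=-i\pi/2$) to convert both integrals to contour integrals in $w$. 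The binomial identities $(z-w)^{k-2} = \sum_n\binom{k-2}{n}(-1)^n z^{k-2-n}w^n$ and $(zw+1)^{k-2} = \sum_n\binom{k-2}{n}z^{k-2-n}w^{k-2-n}$ collapse the two sums, producing integrals proportional to $\int_i^{i\infty}(f-a_0)(z-w)^{k-2}(u(Sw)-u(w))^m dw$ and $\int_i^{i\infty}(f-a_0)(zw+1)^{k-2}(u(Sw)-u(w))^m dw$. Via the telescoping formula together with $\log(-1/w)+c_S = -(\log w+c_S)$, one recognizes $(u(Sw)-u(w))^m = (-1)^{m(m-1)/2}V_m(S,\dots,S)(w)$ and $V_m(S,\dots,S)(Sw) = (-1)^m V_m(S,\dots,S)(w)$. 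Substituting $w\mapsto -1/w$ in the second integral and using the modular transformation $f(-1/w) = w^k f(w)$ together with $z^{k-2}(w+1/z)^{k-2} = (zw+1)^{k-2}$ converts it into the integral defining $v_f(S,\dots,S)|_{2-k}S(z)$; the extra $a_0/w^k$ piece combines with the explicit residues $a_0 m!/(n+1)^{m+1}$ and $a_0 i^k m!/(k-n-1)^{m+1}$ to produce precisely $a_0 m!\bigl(P(z) + (-1)^{m+1}(P|_{2-k}S)(z)\bigr) = a_0 m!(P|_{2-k}(1+(-1)^{m+1}S))(z)$ after re-indexing $n\mapsto k-2-n$ in the second sum and noting $(-n-1)^{m+1} = (-1)^{m+1}(n+1)^{m+1}$.

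The main obstacle is bookkeeping: tracking the three independent sign contributions $(-1)^m$, $(-1)^{m(m-1)/2}$, and $i^k$, and carefully handling the contour in the substitution $w\mapsto-1/w$, where $f(-1/w) - a_0 = w^k f(w) - a_0$ acquires a pole of order $k$ at $w = 0$. The nominally divergent $a_0 w^{-k}$ contributions must cancel exactly against corresponding terms in the second imaginary-axis integral, leaving behind only the finite integrals that assemble into $v_f(S,\dots,S)|_{2-k}(1+(-1)^{m+1}S)$ and the polynomial correction $a_0 m!\, P|_{2-k}(1+(-1)^{m+1}S)$.
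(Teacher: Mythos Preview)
Your approach is essentially the paper's own. The paper's proof consists of the single step of differentiating \eqref{Mellin} $m$ times to obtain exactly the formula you wrote for $\Lambda_f^{(m)}(s)$, followed by the instruction to repeat the computation of Proposition~\ref{1stder} \emph{mutatis mutandis}; you have carried out precisely that, supplying the extra bookkeeping details (the vanishing of the middle terms in $d^m v_f(S,\dots,S)$ via $V_m(\dots,-I,\dots)=0$, the identification $(u(Sw)-u(w))^m=(-1)^{m(m-1)/2}V_m(S,\dots,S)(w)$, and the handling of the $a_0$ pieces) that the paper leaves implicit.

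One minor remark: your route through the substitution $w\mapsto -1/w$ and the ``nominally divergent $a_0 w^{-k}$'' discussion is more elaborate than necessary. In the paper's Proposition~\ref{1stder} the two integrals $\int_i^{i\infty}(f-a_0)(z-w)^{k-2}(u(Sw)-u(w))^m\,dw$ and $\int_i^{i\infty}(f-a_0)(zw+1)^{k-2}(u(Sw)-u(w))^m\,dw$ are recognized directly (without any change of variable) as $-v_f(S,\dots,S)$ and $-v_f(S,\dots,S)|_{2-k}S$ once one writes $v_f(S,\dots,S)(z)=\int_i^z f(w)(w-z)^{k-2}V_m(w)\,dw+\int_\infty^i(f-a_0)(w-z)^{k-2}V_m(w)\,dw$ and uses $(zw+1)^{k-2}=(w-Sz)^{k-2}j(S,z)^{k-2}$; no pole at $w=0$ ever enters. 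Your version is not wrong, but the cancellation you flag as the ``main obstacle'' is an artifact of the detour.
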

\begin{proof} As before, we have
\begin{align*}
\begin{split}
\Lambda_f^{(m)}(s)&=\int_1^{\infty}(f(iy)-a(0))y^{s-1}\log^{m} (y) dy\\
&+(-1)^m i^k \int_1^{\infty}(f(iy)-a(0))y^{k-s-1}\log^{m} (y) dy+
\frac{a(0) m!}{(-s)^{m+1}}+\frac{a(0)m!i^k}{(s-k)^{m+1}}.
\end{split}
\end{align*}
We deduce the result mutatis mutandis by working as in Proposition \ref{1stder}.
\end{proof}

\section{Zeros of ``period polynomials"}\label{0s}
In this section we will prove that, in the special case that $f$ is an Eisenstein series,
the zeros of the ``odd part" of the ``period polynomials" we have attached to 
values of derivatives of $L$-functions lie on the unit circle. 

To highlight more clearly the key ideas, we study on its own the case of first derivative 
and then show how this can be generalized to higher derivatives.


\subsection{The case of the first derivative }
We will now prove the analogue of Theorem 5.1. of \cite{MSW} for first derivatives. For simplicity, 
we focus on the case of weight $k\equiv0\pmod 4$. 
Consider the piece of $\sigma_f(S, S)$ in Proposition \ref{1stder} that contains the first derivatives
of $L$-functions, i.e.,
\begin{equation}\label{derivpiece}
-\sum_{n=0}^{k-2} \binom{k-2}{n} i^{1-n}\Lambda_f'(n+1)z^{k-2-n}
\end{equation}
We will study the part of this polynomial that captures the values of $\Lambda_f'$ at even arguments
$$P_f(z):=
\sum_{\substack{n=1 \\ n \, \, \text{odd}}}
^{k-3} \binom{k-2}{n} i^{1-n}\Lambda_f'(n+1)z^{k-3-n}.$$
(For simplicity, we factor out a $-z$ from \eqref{derivpiece} because it only affects trivially the zeros
of the polynomial.)
\begin{thm}\label{1stderE} Let $k \equiv 0 \pmod 4$ and let
$$f(\tau):=E_k(\tau)=1+\frac{(2 \pi)^k}{\zeta(k)\Gamma(k)}\sum_{n=1}^{\infty} \sigma_{k-1}(n)e^{2 \pi i n \tau}.$$
Then all zeros of $P_f(z)$ lie on the unit circle.
\end{thm}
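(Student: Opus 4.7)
The plan is to mirror the strategy of \cite{MSW} for the odd part of the classical period polynomial of $E_k$, adapted to accommodate the extra factor of $\log v$ that arises from differentiating \eqref{Mellin} in $s$. Starting from the expression for $\Lambda'_{E_k}(s)$ used in the proof of Proposition~\ref{1stder}, I would substitute it into \eqref{derivpiece} and use the binomial identities
$$\sum_{n=0}^{k-2}\binom{k-2}{n}i^{-n}z^{k-2-n}v^n = (z-iv)^{k-2}, \qquad \sum_{n=0}^{k-2}\binom{k-2}{n}i^{-n}z^{k-2-n}v^{k-2-n} = i^{2-k}(1+izv)^{k-2}$$
(with $i^k=1$ and $i^{2-k}=-1$ for $k\equiv 0\pmod 4$) to collapse the generating polynomial to
$$i\int_1^{\infty}(E_k(iv)-1)\bigl[(z-iv)^{k-2}+(1+izv)^{k-2}\bigr]\log v\, dv + R(z),$$
where $R(z)$ is an explicit polynomial correction arising from the $1/s^2$ and $1/(k-s)^2$ poles.

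Next, I would extract the odd part in $z$. Since $k-2$ is even, $(-z\pm iv)^{k-2}=(z\mp iv)^{k-2}$, so $P_f(z)$ equals $(-z)^{-1}$ times an integral with kernel
$$(z-iv)^{k-2}-(z+iv)^{k-2}+(1+izv)^{k-2}-(1-izv)^{k-2},$$
plus the odd part of $R$. Only even powers of $z$ appear in $P_f$, so $P_f(z)=Q(z^2)$ with $\deg Q=(k-4)/2$, and it suffices to locate all $(k-4)/2$ zeros of $Q$ on the unit circle. Setting $z=e^{i\theta}$ and multiplying by the unit-modulus factor $e^{-i(k-4)\theta/2}$ converts the expression into a real-valued function of $\theta$ on $(0,\pi)$ whose zeros we must count.

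Expanding $E_k(iv)-1 = c_k\sum_{m,d\ge1}d^{k-1}e^{-2\pi m d v}$ with $c_k=(2\pi)^k/(\zeta(k)\Gamma(k))$ and swapping sum and integral turns the problem into a double sum over $(m,d)$ of integrals $\int_1^{\infty} e^{-2\pi m d v}K_\theta(v)\log v\, dv$, where $K_\theta$ is an explicit polynomial in $v$ with $\theta$-dependent trigonometric coefficients. The goal is then to prove that, for a carefully chosen arithmetic progression $0<\theta_1<\dots<\theta_{(k-4)/2}<\pi$, the function alternates in sign because the $(m,d)=(1,1)$ contribution dominates the tail at each $\theta_j$; the Intermediate Value Theorem then furnishes the required $(k-4)/2$ zeros.

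The main obstacle, compared to \cite{MSW}, is the $\log v$ factor: in the classical period polynomial case each inner integral collapses to an elementary expression in $1/(e^{2\pi md}-1)$, making the dominance of the $(1,1)$ term essentially automatic, whereas here the inner integrals are $\partial_s$-derivatives of incomplete gamma values $\Gamma(s,2\pi m d)/(2\pi m d)^s$. Although $\Gamma(s,a)\sim a^{s-1}e^{-a}$ still gives exponential decay in $md$, the implicit constants and the $\theta$-dependence are considerably more delicate, so a uniform-in-$\theta$ comparison of the $(1,1)$ term with the tail, together with a careful treatment of the rational contribution $R(z)$ from the pole terms, is the technical heart of the argument.
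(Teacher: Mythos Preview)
Your proposal is not a proof but a strategy sketch, and you yourself identify the gap: the final ``uniform-in-$\theta$ comparison of the $(1,1)$ term with the tail'' is stated as the technical heart but is not carried out. In the classical case of \cite{MSW} the inner integrals collapse to elementary closed forms; with the extra $\log v$ you lose this, and the sign-change/IVT bookkeeping you outline is genuinely delicate. As written, the argument stops exactly where the difficulty begins.

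More importantly, you have overlooked the structural feature of $E_k$ that makes the theorem tractable, and this is what the paper exploits. Rather than treating $E_k$ as a generic modular form through its Fourier expansion, use the factorisation $L_{E_k}(s)=\zeta(s)\zeta(s-k+1)$. Combined with the functional equation of $\zeta$, this gives the closed form
\[
\Lambda_f(s)=2(2\pi)^{-k}\cos\!\Big(\frac{\pi s}{2}\Big)\,\Gamma(s)\Gamma(k-s)\,\zeta(s)\zeta(k-s),
\]
and hence an explicit expression for $\Lambda_f'(2n+2)$ in terms of harmonic numbers $H_{2n+1}-H_{k-2n-3}$ and the logarithmic derivative $\zeta'/\zeta$. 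No integrals with $\log v$ survive.

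With this in hand the paper's route is purely algebraic. The functional equation $\Lambda_f'(s)=-\Lambda_f'(k-s)$ shows the polynomial is self-inversive, so one writes it as $q_f(z)-z^{k/2-2}q_f(1/z)$ and invokes the criterion of \cite{ElG-R}: it suffices that all zeros of $q_f$ lie in $|z|\le 1$. Because $H_m$ is increasing and $\zeta'/\zeta$ is negative and increasing on $(1,\infty)$ (from its Dirichlet series with von~Mangoldt coefficients), the coefficients of $q_f$ form a non-negative increasing sequence, and the Enestr\"om--Kakeya theorem finishes the argument.

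So the comparison is: your approach is analytic (Fourier expansion, oscillatory integrals, IVT) and remains incomplete at the crucial estimate; the paper's approach is algebraic (explicit $\zeta$-factorisation, self-inversive decomposition, coefficient monotonicity, Enestr\"om--Kakeya) and avoids all asymptotic analysis. The key idea you are missing is to use the product structure of $L_{E_k}$ rather than the $q$-expansion of $E_k$.
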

\begin{proof}
By \eqref{FE} , our polynomial can be expressed by
$$-\sum_{\substack{n=1 \\ n \, \, \text{odd}}}^{k-3} \binom{k-2}{n} i^{n-1}\Lambda_f'(n+1)z^{n-1}=
-\sum_{n=0}^{\frac{k}{2}-2} \binom{k-2}{2n+1} i^{2n}\Lambda_f'(2n+2)z^{2n}.$$
The assertion of the theorem is then equivalent to the statement that the zeros of
$$-\sum_{n=0}^{\frac{k}{2}-2} \binom{k-2}{2n+1} \Lambda_f'(2n+2)(-z)^n.$$
are all on the unit circle. 

Since, by \eqref{FE}, $\Lambda'_f(k/2)=0$, we have that
\begin{align*}
\begin{split}
&-\sum_{n=0}^{\frac{k}{2}-2} \binom{k-2}{2n+1} \Lambda_f'(2n+2)(-z)^n\\
=&-\sum_{n=0}^{\frac{k}{4}-2} \binom{k-2}{2n+1} \Lambda_f'(2n+2)(-z)^n
-\sum_{n=\frac{k}{4}}^{\frac{k}{2}-2} \binom{k-2}{2n+1} \Lambda_f'(2n+2)(-z)^n
 \\
&-\sum_{n=0}^{\frac{k}{4}-2} \binom{k-2}{2n+1} \Lambda_f'(2n+2)(-z)^n+
q_f(z)
=-q_f(1/z)z^{\frac{k}{2}-2}+q_f(z)
\end{split}
\end{align*}
where
$$q_f(z):= -\sum_{n=\frac{k}{4}}^{\frac{k}{2}-2} \binom{k-2}{2n+1} \Lambda_f'(2n+2)(-z)^n.
$$
(At the last step we made the change of variables $n \to \frac{k}{2}-2-n$ and used \eqref{FE} and that $k/2-2$ is even.)
The theory of ``self-inversive" polynomials (see, for instance Th. 2.2 of \cite{ElG-R})
implies that it suffices to show that all zeroes of $q_f(z)$
are in $|z| \le 1.$

To show this, we first re-write $\Lambda_f'(2n+2)$. It is well-known that the $L$-function of $E_k$ equals $\zeta(s)\zeta(s-k+1)$ and,
thus, together with the functional equation for $\zeta(s)$ (in the form with the sine function) we have
\begin{align}\label{FEz}
\begin{split}
\Lambda_f(s)&=\frac{\Gamma(s)}{(2 \pi)^s}\zeta(s)\frac{2^{s-k+1}}{\pi^{k-s}}\Gamma(k-s)\sin\left(\frac{\pi}{2}(s-k+1)\right)\zeta(k-s)
\\ 
&=2 (2 \pi)^{-k} \cos(\frac{\pi s}{2}) \Gamma(s) \Gamma(k-s)\zeta(s) \zeta(k-s).
\end{split}
\end{align}
Therefore, 
\begin{equation}\label{Lambda'}
\Lambda_f'(s)=\Lambda_f(s) \left(\frac{-\pi \sin(\frac{\pi s}{2})}{2\cos(\frac{\pi s}{2})}+
\frac{\Gamma'(s)}{\Gamma(s)}
-
\frac{\Gamma'(k-s)}{\Gamma(k-s)}+\frac{\zeta'(s)}{\zeta(s)}-\frac{\zeta'(k-s)}{\zeta(k-s)}\right),
\end{equation}
and thus for $n \ge 0$  we see that 
$\Lambda'_f(2n+2)$ equals 
\begin{align*} 
&\frac{2}{(2 \pi)^k} (2n+1)! (k-2n-3)! \cos(\pi(n+1)) \zeta(2n+2) \zeta(k-2n-2) \\ 
&\times\left(H_{2n+1}-H_{k-2n-3}+\frac{\zeta'(2n+2)}{\zeta(2n+2)}-
\frac{\zeta'(k-2n-2)}{\zeta(k-2n-2)}\right).
\end{align*}
Here $H_n:=\sum_{l=1}^{n}1/l$, and we used (5.4.14) of \cite{NIST}.

Thus, 
\begin{align*} q_f(z)&= \frac{-2 (k-2)!}{(2 \pi)^k} \sum_{n=\frac{k}{4}}^{\frac{k}{2}-2} \zeta(2n+2)\zeta(k-2n-2) (-1)^{n+1} \\ 
&\times\left(H_{2n+1}-H_{k-2n-3}+\frac{\zeta'(2n+2)}{\zeta(2n+2)}-
\frac{\zeta'(k-2n-2)}{\zeta(k-2n-2)}\right) (-z)^n .
\end{align*}
It is now clear that $H_n \le H_m$ when $n \le m$, and that $\zeta'(s)/\zeta(s)$ is negative and increasing for $s>1$ follows from the well-known Dirichlet series expansion
\begin{equation}\label{vonM}
\frac{\zeta'(s)}{\zeta(s)}=-\sum_{n=1}^{\infty} \frac{\Lambda(n)}{n^s},
\end{equation}
where $\Lambda(n)$ is the von Mangoldt function.
 Therefore, for $n, m$ with $\frac{k}{4} \le n \le m \le \frac{k}{2}-2$, 
we have 
\begin{align*}
\begin{split} 
0 & \le 
H_{2n+1}-H_{k-2n-3}+\frac{\zeta'(2n+2)}{\zeta(2n+2)}-
\frac{\zeta'(k-2n-2)}{\zeta(k-2n-2)}  
\\
&\le
H_{2m+1}-H_{k-2m-3}+\frac{\zeta'(2m+2)}{\zeta(2m+2)}-
\frac{\zeta'(k-2m-2)}{\zeta(k-2m-2)}
.
\end{split}
\end{align*}
It further follows that $f(x):=\zeta(2x+2)
\zeta(k-2x-2)$ is increasing in $[\frac{k}{4}-1, \frac{k}{2}-2]$ because then
\begin{equation}\label{a_n}
f'(x)=2f(x) \left(\frac{\zeta'(2x+2)}{\zeta(2x+2)}-
\frac{\zeta'(k-2x-2)}{\zeta(k-2x-2)}\right)>0.
\end{equation}
Therefore the coefficients of $z^n$ in $q_f(z)$ form a non-negative and increasing 
sequence and thus, the Enestr\"om-Kakeya Theorem \cite{En,Ka} 
implies that the zeros of $q_f(z)$
are all in $|z|\le 1$. Therefore,
Theorem 2.2 of \cite{ElG-R} implies the theorem.
\end{proof}

\subsection{Zeros of polynomials associated to higher derivatives}
For $j=0, 1, \dots$ set
\[
\Psi_{j+1}(s):=\frac{\partial^j}{\partial s^j}\left (
\frac{\Gamma'(s)}{\Gamma(s)}
-\frac{\Gamma'(k-s)}{\Gamma(k-s)} \right)
\]
and
\[Z_{j+1}(s):=
\frac{\partial^j}{\partial s^j}\left (\frac{\zeta'(s)}{\zeta(s)}
-\frac{\zeta'(k-s)}{\zeta(k-s)} \right ).
\]
We further
set $f(s):=-\frac{\pi}2\cdot\tan(\pi s/2)$ and we note that for even integers $s$, and for $j\in\mathbb N$,
\[
f^{(j)}(s)=\begin{cases}0& \text{if} j\in2\mathbb N, 
\\
\frac{(-1)^{\frac{j+1}2}B_{j+1}(2^{j+1}-1)\pi^{j+1}}{(j+1)}& \text{if} j\in2\mathbb N+1,
\end{cases}
\]
where $B_n$ denotes the $n$-th Bernoulli number.
Since it does not depend on $s \in 2 \mathbb{Z}$,
denote this constant by $b_{j+1}$ (so that $b_1=0$, $b_2=-\pi/2$, $b_3=0,\ldots$). 

An iterated application of the Leibniz rule to \eqref{Lambda'} implies, by induction, that 
\begin{equation}\label{finalLambda}
\Lambda_f^{(m+1)}(s)=\Lambda_f(s) \sum\nolimits^* c_{i_1, \dots, j_1, \dots, k_1, \dots, n_{i_1}, \dots, n_{j_1} \dots} b_{i_1}^{n_{i_1}} \dots \Psi_{j_1}^{n_{j_1}}(s) \cdots Z_{k_1}^{n_{k_1}}(s) \cdots
\end{equation}
for some  $c_{i_1, \dots, j_1, \dots, k_1, \dots, n_{i_1}, \dots, n_{j_1} \dots} \ge 0$,
where the star indicates that the sum ranges over all positive $i_1, \dots, j_1, \dots, k_1, \dots , n_{i_1}, \dots, n_{j_1}, \dots$ such that 
$$i_1 n_{i_1}+\dots+j_1 n_{j_1}+\dots+k_1 n_{k_1}+\dots=m+1.$$

We consider the part of the right hand side of \eqref{finalLambda} that does not include the lower order terms arising from the constants $b_j$:
\[\Lambda_f(s) \sum\nolimits^* c_{j_1, \dots, k_1, \dots, n_{i_1}, \dots, n_{j_1} \dots} \Psi_{j_1}^{n_{j_1}}(s) \dots Z_{k_1}^{n_{k_1}}(s) \dots.
\]
Working as above, \eqref{FEz} shows that the $(m+1)$-st derivative of 
$$\widetilde{\Lambda}_f(s):=\frac{\Lambda_f(s)}{\cos(\frac{\pi s}{2})}$$
equals that part, i.e.,
$$\widetilde{\Lambda}_f^{(m+1)}(s)=\widetilde{\Lambda}_f(s) \sum\nolimits^* c_{j_1, \dots, k_1, \dots, n_{i_1}, \dots, n_{j_1} \dots} \Psi_{j_1}^{n_{j_1}}(s) \dots Z_{k_1}^{n_{k_1}}(s) \dots.$$

In view of this, set $$P^m_f(z):=\sum_{\substack{n=1 \\ n \, \, \text{odd}}}
^{k-3} \binom{k-2}{n} i^{1-n}\widetilde{\Lambda}_f^{(m)}(n+1)z^{k-3-n}.$$
Then we have
\begin{thm} Let $k \equiv 0 \pmod 4$ and $m \ge 1.$ Then all zeros of $P^{m}_f(z)$ lie on the unit circle.
\end{thm}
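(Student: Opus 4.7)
The plan is to follow the proof of Theorem~\ref{1stderE} step by step, replacing \eqref{Lambda'} by the analogous expansion
\[
\widetilde{\Lambda}_f^{(m)}(s) = \widetilde{\Lambda}_f(s)\sum\nolimits^* c_{\cdots}\,\prod_l\Psi_l(s)^{n_l}\prod_l Z_l(s)^{n_l}
\]
with non-negative constants $c_{\cdots}$, obtained from \eqref{finalLambda} by discarding the $b_j$-terms. Since $\cos(\pi s/2)$ is symmetric under $s\mapsto k-s$ for $k\equiv 0\pmod 4$, we have $\widetilde{\Lambda}_f(s)=\widetilde{\Lambda}_f(k-s)$, and differentiation gives $\widetilde{\Lambda}_f^{(m)}(k-s)=(-1)^m\widetilde{\Lambda}_f^{(m)}(s)$.

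First, I would reparametrize the odd index by $n=2j+1$ and pass to the variable $t=-1/z^2$ ($P^m_f$ contains only even powers of $z$, so this is legitimate). Using that $k/2-2$ is even, the alternating signs cancel, producing the polynomial
\[
T(t) := \sum_{j=0}^{k/2-2}a_j\,t^{j},\qquad a_j:=\binom{k-2}{2j+1}\widetilde{\Lambda}_f^{(m)}(2j+2),
\]
whose unimodular zeros correspond bijectively to those of $P^m_f$. The functional equation gives $a_{k/2-2-j}=(-1)^m a_j$, so $T$ is self-inversive with sign $(-1)^m$. I would then split $T(t)=q(t)+(-1)^m t^{k/2-2}q(1/t)$, where $q(t):=\sum_{j=k/4}^{k/2-2}a_j t^j$ for $m$ odd (the middle coefficient $a_{k/4-1}$ vanishing automatically, as $\widetilde{\Lambda}_f^{(m)}(k/2)=0$), and $q(t):=\tfrac12 a_{k/4-1}t^{k/4-1}+\sum_{j=k/4}^{k/2-2}a_j t^j$ for $m$ even (absorbing half of the non-vanishing middle). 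Theorem~2.2 of \cite{ElG-R} then reduces the task to showing that $q$ has all its zeros in $|t|\le 1$.

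Second, the cancellation $\binom{k-2}{2j+1}(2j+1)!(k-2j-3)!=(k-2)!$ used in Theorem~\ref{1stderE} reduces $a_j$, up to the positive constant $\tfrac{2(k-2)!}{(2\pi)^k}$, to
\[
\zeta(2j+2)\zeta(k-2j-2)\sum\nolimits^* c_{\cdots}\prod_l\Psi_l(2j+2)^{n_l}\prod_l Z_l(2j+2)^{n_l}.
\]
The $\zeta\zeta$-factor is positive and strictly increasing on $[k/2,k-2]$ by \eqref{a_n}. For each $\Psi_{l+1}(s)=\psi^{(l)}(s)-(-1)^l\psi^{(l)}(k-s)$, the series $\psi^{(l)}(x)=(-1)^{l+1}l!\sum_{n\ge 0}(n+x)^{-l-1}$ shows $|\psi^{(l)}(x)|$ is strictly decreasing in $x>0$; case analysis on the parity of $l$ then gives $\Psi_{l+1}(s)\ge 0$ whenever $s\ge k/2$. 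Direct differentiation produces the identity $\Psi_{l+1}'(s)=\Psi_{l+2}(s)$, so non-negativity at all orders inductively establishes monotonicity. The same argument, using the Dirichlet series $(\zeta'/\zeta)^{(l)}(s)=(-1)^{l+1}\sum_n\Lambda(n)(\log n)^l n^{-s}$ derived from \eqref{vonM}, proves the analogous properties for $Z_{l+1}$.

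Since products of non-negative increasing functions and sums of such with non-negative coefficients preserve both properties, $a_j$ is non-negative and non-decreasing in $j$ on the upper half $j\in[k/4-1,k/2-2]$; consequently the coefficients of $q(t)/t^{k/4-1}$ (or $q(t)/t^{k/4}$ for $m$ odd) form a non-negative non-decreasing sequence in order of increasing power, the halving for $m$ even preserving monotonicity since $\tfrac12 a_{k/4-1}\le a_{k/4}$. The Enestr\"om--Kakeya theorem then places all zeros of $q$ in $|t|\le 1$, and Theorem~2.2 of \cite{ElG-R} completes the proof. The main obstacle I anticipate is the uniform positivity and monotonicity of the higher-order $\Psi_{l+1}$ and $Z_{l+1}$, which is handled inductively via the recursive identity above but requires careful sign bookkeeping at each parity of $l$; a secondary subtlety is the symmetric splitting of the non-vanishing middle term in the self-inversive decomposition when $m$ is even.
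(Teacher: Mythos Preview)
Your proposal is correct and follows essentially the same route as the paper: reduce via the functional equation and self-inversive structure (Theorem~2.2 of \cite{ElG-R}) to showing the upper-half polynomial $q$ has all zeros in $|t|\le 1$, then verify that $\Psi_j$ and $Z_j$ are positive and increasing on $[k/2,k-2]$ so that Enestr\"om--Kakeya applies. The only cosmetic differences are your use of the recursion $\Psi_{l+1}'=\Psi_{l+2}$ (the paper argues monotonicity directly from the series term by term) and your explicit case split on the parity of $m$ for the middle coefficient (the paper absorbs this uniformly into the factor $\delta_n$); neither changes the substance of the argument.
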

\begin{proof}
Equation \eqref{FEz} implies that 
$$\widetilde{\Lambda}_f(s)=\widetilde{\Lambda}_f(k-s)$$ and therefore
$\widetilde{\Lambda}_f
^{(m)}(s)=(-1)^{m}\widetilde{\Lambda}_f^{(m)}(k-s).$
Thus the claim is equivalent to the unimodularity of the zeros of 
$$\sum_{n=0}^{\frac{k}{2}-2} \binom{k-2}{2n+1} \widetilde{\Lambda}_f^{(m)}(2n+2)z^n.$$
As in the proof of Th. \ref{1stderE}, this polynomial can be expressed as
$$q^m_f(z)+(-1)^m q^m_f(1/z)z^{\frac{k}{2}-2}$$
where
$$q^m_f(z):= \sum_{n=\frac{k}{4}-1}^{\frac{k}{2}-2}\delta_n \binom{k-2}{2n+1} \widetilde{\Lambda}_f^{(m)}(2n+2)z^n
$$
where $\delta_n=1/2$ if $n=k/4-1$, and $1$ otherwise.
With Th. 2.2 of \cite{ElG-R}, for the proof of unimodularity of the zeroes $P^{m}_f(z)$ it suffices
to show that the zeros of $q^m_f$ are in $|z| \le 1$.
 
To prove this we will use the following 
\begin{lem}\label{monot} For each $j=1, 2, \dots$, the functions $\Psi_j(s)$ and $Z_j(s)$ are positive and 
increasing in $[k/2, k-2]$.
\end{lem}
\begin{proof}
Eq. (5.15.1) of \cite{NIST} implies 
\[
\Psi_{j+1}(s)=j!\sum_{r\geq0}\left(\frac{(-1)^{j+1}}{(s+r)^{j+1}}+\frac{1}{(r+k-s)^{j+1}}\right).
\]
It is enough to show that each term in this series is positive: 
This is obvious for $s \in [k/2, k-2]$ when $j$ is odd. If $j$ is even, we have, for each $r\geq0$, 
\[s+r>r+k-s>0 \qquad \text{ and thus} \, \,
\frac{1}{(r+k-s)^{j+1}}-\frac1{(s+r)^{j+1}}>0,
\]
for $s \in (k/2, k-2].$ (The monotonicity at $k/2$ follows by continuity).

For the monotonity of $Z_j$, we note that \eqref{vonM} implies 
\[
Z_{j+1}(s)=\sum_{r\geq1}\Lambda(r)\log^j(r)\cdot\left(\frac{(-1)^{j+1}}{r^s}+\frac{1}{r^{k-s}}\right)>0
\]
for all $j$. The positivity of each term follows trivially for $j$ odd and from the inequality $r^s>r^{k-s}$ when 
$j$ is even
\end{proof}
Returning to the proof of the theorem, we see that 
\begin{multline}
q_f^m(z)= \\ \sum_{n=\frac{k}{4}-1}^{\frac{k}{2}-2} \binom{k-2}{2n+1} 
\delta_n \widetilde{\Lambda}_f(2n+2) \left ( \sum\nolimits^* c_{j_1, \dots, k_1, \dots, n_{i_1}, \dots, n_{j_1} \dots} \Psi_{j_1}^{n_{j_1}}(2n+2) \dots Z_{k_1}^{n_{k_1}}(2n+2) \dots. \right ) z^n \\
\frac{2 (k-2)!}{(2 \pi)^k}\sum_{n=\frac{k}{4}-1}^{\frac{k}{2}-2}\delta_n a_n
\left ( \sum\nolimits^* c_{j_1, \dots, k_1, \dots, n_{i_1}, \dots, n_{j_1} \dots} \Psi_{j_1}^{n_{j_1}}(2n+2) \dots Z_{k_1}^{n_{k_1}}(2n+2) \dots. \right ) z^n,
\end{multline}
where $a_n=\zeta(2n+2)\zeta(k-2n-2).$ As shown by \eqref{a_n}, $a_n$ and thus $\delta_n a_n$ is increasing as $n$ ranges from $\frac{k}{4}-1$ to $\frac{k}{2}-2$.
Also, the term inside the brackets is a linear combination, with positive coefficients, of products of $Z_j$ and $\Psi_j$ which, by Lemma \ref{monot}, are increasing in the range of interest. Therefore, the coefficients of $z^n$ form an increasing and positive sequence and thus,
by the Enestr\"om-Kakeya (see e.g. Th. 1.3 of \cite{GG} Theorem we deduce the result.
\end{proof}

\section{Conjecture~\ref{Conj0} in the cuspidal case}\label{Gen}
The results above for polynomials associated to Eisenstein series motivate the analogous statement for the ``period polynomial" associated  to general modular forms in Proposition \ref{higherderivative}. 
For convenience, we recall the precise formulation of this statement, which was given as Conjecture~\ref{Conj0} above.
\begin{conj*}
For any Hecke eigenform of weight $k$ and level $1$, and for each $m\in\mathbb Z_{\geq0}$, the polynomial
\[
\sum_{n=0}^{k-2} \binom{k-2}{n} i^{1-n}\Lambda_f^{(m)}(n+1)z^{k-2-n}
\]
has all its zeros on the unit circle. 
Moreover, the ``odd part'' 
\[\sum_{\substack{n=1 \\ n \, \, \text{odd}}}
^{k-3}\binom{k-2}{n} i^{1-n}\Lambda_f^{(m)}(n+1)z^{k-3-n}
\]
has all of its zeros on the unit circle, except for a trivial zero at $z=0$ and $4$ zeros at the points $\pm a,\pm1/a$ for some real number $a$.
\end{conj*}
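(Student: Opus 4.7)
The plan is to adapt the strategy of Conrey--Farmer--Imamo\u{g}lu \cite{CFI} (for the odd part) and El-Guindy--Raji \cite{ElG-R} (for the full polynomial) to the setting of derivatives of $L$-functions. The key input is an explicit integral representation of the conjectural polynomial. For a cusp form $f$, differentiating the Mellin integral \eqref{Mellin} $m$ times yields
\[
\Lambda_f^{(m)}(s) = \int_1^{\infty} f(iv)\bigl(v^{s-1} + (-1)^m i^k v^{k-s-1}\bigr) \log^m(v)\, dv.
\]
Substituting this into the polynomial $Q_f(z)$ of Conjecture~\ref{Conj0}, interchanging sum and integral, and applying the binomial theorem produces
\[
Q_f(z) = i\int_1^{\infty} f(iv)\log^m(v) \Bigl[(z-iv)^{k-2} + (-1)^m i^k(vz-i)^{k-2}\Bigr]\,dv,
\]
which manifestly exhibits the self-inversive relation already noted in the paper. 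By the self-inversive-polynomial criterion (Theorem 2.2 of \cite{ElG-R}), it then suffices to prove that all zeros of $Q_f$ lie in the closed unit disk $|z|\leq 1$.

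The next step would be to specialize $z = e^{i\theta}$ and multiply by the phase $e^{-i(k-2)\theta/2}$ so that the bracket in the integrand becomes real-valued for each $v > 1$; since a level-$1$ Hecke eigenform has real Fourier coefficients, the integral then defines a real-valued function $F(\theta)$ of $\theta \in [0, 2\pi)$. Following the CFI strategy, I would evaluate $F$ at an appropriate equally-spaced set of points in $[0, 2\pi)$ and show that $F$ alternates in sign at these points, thereby forcing $k-2$ sign changes and accounting for all of the zeros of $Q_f$ on the unit circle. For the odd part, which equals $\tfrac{1}{2z}\bigl(Q_f(z) - Q_f(-z)\bigr)$, the analogous integral representation contains the kernel $(z-iv)^{k-2} - (z+iv)^{k-2} + (-1)^m i^k\bigl((vz-i)^{k-2} - (vz+i)^{k-2}\bigr)$, and the argument would split into proving unimodularity for most zeros and isolating the four real zeros $\pm a^{\pm 1}$; the value of $a$ would be identified from the real-axis behavior of the polynomial, generalizing the explicit value $a=2$ of the classical case.

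The main obstacle is the $\log^m(v)$ weight in the integrand. In the classical case $m=0$, the kernel at $v=1$ is a large positive quantity whose contribution dominates the tail via a monotonicity/positivity argument; but for $m \geq 1$, the weight $\log^m(v)$ vanishes at $v=1$, shifting the mass of the integral to larger values of $v$ where the kernel oscillates rapidly. Controlling this oscillation would require either a uniform saddle-point analysis (using the Ramanujan--Deligne bound $|a_n| \ll n^{(k-1)/2+\varepsilon}$ on Fourier coefficients to dispatch the tail) or repeated integration by parts transferring the $\log^m$ factor onto the kernel, at the cost of producing lower-order terms that must also be controlled. A secondary obstacle is to rigorously pin down the four real zeros $\pm a^{\pm 1}$ of the odd part: the numerics in Section~\ref{Gen} show that $a$ depends nontrivially on both $k$ and $m$, so establishing its existence and uniqueness will likely require either a perturbative argument around the classical value at $m=0$ or a careful asymptotic as $k \to \infty$.
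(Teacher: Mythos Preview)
The statement you are attempting is a \emph{conjecture} in the paper, not a theorem: there is no proof in the paper to compare your proposal against. For general Hecke eigenforms the paper leaves Conjecture~\ref{Conj0} open and discusses precisely the obstacles you raise in Section~\ref{Gen}. What the paper does prove is only the Eisenstein case of the odd part (Theorem~\ref{1stderE} and its higher-derivative extension), and the method there is quite different from yours: the authors write the polynomial in self-inversive form $q_f(z)\pm z^{k/2-2}q_f(1/z)$, then use explicit formulas via $\zeta$-values, harmonic numbers, and logarithmic derivatives of $\zeta$ and $\Gamma$ to show the coefficients of $q_f$ are positive and increasing, so that Enestr\"om--Kakeya applies. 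No integral representation or sign-change count on the circle is used.

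Your outline is a reasonable research plan rather than a proof, and you are candid about this. The main obstruction you identify---that the weight $\log^m(v)$ vanishes at $v=1$ and shifts the mass of the integral into the oscillatory regime---is exactly the phenomenon the paper flags in Section~\ref{Gen} with the weight-$20$ example, where the coefficients of the half-polynomial oscillate and neither Enestr\"om--Kakeya nor a direct CFI-type argument goes through. So you have correctly located the gap, but you have not closed it; the saddle-point and integration-by-parts suggestions remain speculative.

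One technical slip: after noting self-inversiveness you write that it suffices to show all zeros of $Q_f$ lie in $|z|\le 1$. That is circular, since for a self-inversive polynomial the zeros come in pairs $z_0,\,1/\bar z_0$, and ``all zeros in the closed disk'' is already equivalent to ``all zeros on the circle''. The El-Guindy--Raji criterion requires you first to decompose $Q_f(z)=q(z)\pm z^{k-2}q(1/z)$ and then show the zeros of the \emph{half-polynomial} $q$ lie in $|z|\le 1$, exactly as the paper does in the Eisenstein proof.
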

\bf Remark. \rm 
In \cite{CFI}, the conjecture for the odd part is shown to be true when $m=0$ with $a=2$. There, it is shown that these ``trivial zeros'' arise in a natural way from the Eichler-Shimura relations. It would be interesting to see if similar Eichler-Shimura-type relations for higher derivative period polynomials explain the nature of these numbers $a$. It also seemed, numerically, that $a$ only depends on the weight and the number of $L$-derivatives taken, and not on the particular eigenform, which would suggest such an approach is plausible.
In addition to our proof of the first part of this conjecture in the special case of Eisenstein series, there is both theoretical and
numerical evidence for the truth of the full conjecture. 

As mentioned in the introduction, results on the zeros of (classical) period polynomials for 
cusp forms in \cite{CFI,ElG-R,JMOS} etc. extend analogous results about Eisenstein series in \cite{LS,MSW}. 
Furthermore, these results, taken together, cover both the ``odd" part and the full period polynomial, just as our
conjecture does. 

Numerically, we found using SAGE (and in particular Dokchitser's $L$-function calculator) that the conjecture holds for the full period polynomials up to a precision of $10^{-10}$ on the norms of the zeros of each of these polynomials for $m\leq3$ and for all eigenforms of level $1$ and weight $k\leq 50$, and a number of sufficiently large weight examples where tested for the odd polynomials to make the numerical evidence convincing. 

As commented above, this seems somewhat surprising, as usually the ``interesting'' information of modular $L$-functions is in their first non-vanishing central derivative, and no such restriction is made here in the study of these higher $L$-derivative values. Significant theoretical bounds seem to be required to prove this conjecture, even in the first unproven case of $m=1$, for example. In particular, proofs like those in \cite{JMOS} require non-negativity results for central $L$-values, which seem to be harder for higher derivatives. It would also be interesting to present, even in the known case of $m=0$, a uniform proof which covers all cases simultaneously without breaking into finitely many cases and checking the remaining ones numerically. This would be nice theoretically, in order to understand the ``reason'' why the previous results on period polynomials are true, and would be important in order to prove a general conjecture as stated here, where one would like to study infinitely many iterated derivatives (and so cannot numerically verify finitely many cases on each). 

To highlight the difficulty in applying results like the Enestr\"om-Kakeya Theorem above, and why the our conjecture may seem surprising, consider the example of the unique normalized weight $20$, level $1$ cuspidal eigenform $f$. The odd part of the period polynomial built out of first $L$-derivatives, after changing variables and rescaling to be monic, is approximately 
\[
z^{17}-5.805z^{15}+9.685z^{13}-6.720z^{11}+6.720^7-9.685z^5+5.805z^3-z
\]
Writing this polynomial in the form $q_{f}(1/z)z^{18}+q_{f}(z)$ as above in the proof of the analogous result for Eisenstein series, we might like to apply the Enestr\"om-Kakeya Theorem. However, we can see that the coefficients of $q^{o}_{f}$ in this case are oscillating instead of monotonic. 

In addition to proving and explaining the uniform nature of the conjecture above, it would also be interesting to describe the consequences for Eichler-Shimura cohomology. That is, what are the applications of our theorems above and of the conjecture in the case of cusp forms? 
Moreover, is there a suitable theory of Manin ``zeta-polynomials'' as discussed in \cite{ORS}?

\end{document}